\newtheorem{theorem}{Theorem}[section]
\newtheorem{rem}{Remark}[section]
\newtheorem{definition}{Definition}[section]
\newtheorem{Ex}{Example}[section]
\newtheorem{Ass}{Assumption}[section]
\let\originalleft\left
\let\originalright\right
\renewcommand{\left}{\mathopen{}\mathclose\bgroup\originalleft}
\renewcommand{\right}{\aftergroup\egroup\originalright}
\newcommand{\Addresses}{{
		\footnote{

			\noindent	 \textsuperscript{1,2,3,4} Department of Applied Sciences and Engineering, Indian Institute of Technology Roorkee, Roorkee, 247667, India.

			\noindent  \textit{e-mail\textsuperscript{1}:} \texttt{jkumar@as.iitr.ac.in}
			
			\noindent  \textit{e-mail\textsuperscript{2}:} \texttt{sonia.iitd.21@gmail.com}
			
				\noindent  \textit{e-mail\textsuperscript{3}:} \texttt{sumit@as.iitr.ac.in}
				
			\noindent  \textit{e-mail\textsuperscript{4}:} \texttt{jay.dabas@gmail.com.}



			\textit{Key words:} Total controllability, Non-instantaneous impulse, Functional evolution equations, Nonlocal condition, Banach fixed point theorem.
			
			Mathematics Subject Classification (2020): 34K06, 34A12, 37L05, 93B05.

}}}
\begin{document}
	\title[]{ Total Controllability of nonlocal semilinear functional evolution equations with non-instantaneous impulses	\Addresses}
	\author [J. kumar, S. Singh, S. Arora \& J. Dabas]{J. kumar\textsuperscript{1}, S. Singh\textsuperscript{2}, S. Arora\textsuperscript{3} and  J. Dabas\textsuperscript{4*}}
	
\maketitle{}
\begin{abstract}
	In this article, we are discussing a more vital concept of controllability, termed total controllability. We have considered a nonlocal semilinear functional evolution equations with non-instantaneous impulses and finite delay in Hilbert spaces.  A set of sufficient conditions of total controllability is obtained for the evolution system under consideration, by imposing the theory of $C_0$-semigroup and Banach fixed point theorem. We also established the total controllability results for a functional integro-differential equation. Finally, an example is given to demonstrate the feasibility of derived abstract results.
\end{abstract}
\section{Introduction}\label{intro}\setcounter{equation}{0}

Many phenomena and processes that experience abrupt changes in their state at some instants during the evolution process. Such processes are appropriately modelled by impulsive differential equations. Impulsive differential equations have attracted much attention because of their applications in control theory, economics, electrical engineering, biology, (cf.	\cite{azbelev2007introduction, chang2007controllability, FM} etc. The classical instantaneous impulses are not capable to explaining the certain dynamics of evolutionary processes in pharmacotherapy. For examples, the introduction of drugs into the bloodstream, in hydrodynamic equilibrium of a person and the consequent immersion for the body are gradual and continuous processes. Such situations can be suitably described by a new type of impulses, known as non-instantaneous impulses, which start at a fixed point and stay active over a finite time interval. In \cite{EH}, Hernández and O' Regan introduced evolution equations with non-instantaneous impulses. Later Wang et al. in \cite{FM}, modified the impulsive conditions considered in \cite{EH}. Recently, Pierri et al. \cite{PM,Pierri2013}, established the global solutions for a class of impulsive abstract differential equations with non-instantaneous impulses. In \cite{Zhang2017}, Zhang et al., considered an evolution system with non-instantaneous impulses and obtained the existence of extremal mild solutions between upper and lower mild solutions by constructing a new monotone iterative method.

On the other hand, there are several real-world phenomena in which the current state of a system depends on the past states and described by the delay differential equations. For example, the logistic reaction-diffusion model with delay, heat conduction in materials with fading memory, neural networks, inferred grinding models and automatic control systems etc, (cf \cite{FX,LA,JW}). For the basic theory of delay differential equations, one can refer to \cite{azbelev2007introduction,hale2013introduction}.

The notion of controllability plays a significant role in designing and examining the control systems. Controllability refer that the solution of a dynamical control system can steer from an arbitrary initial state to a desired final state by using some suitable control function The controllability of infinite-dimensional control systems through various fixed point theorems is studied extensively, see for instance,\cite{balachandran2000controllability,benchohra2004controllability,chang2007controllability,ji2011controllability,JK,AP}, etc and the references therein. In \cite{balachandran2000controllability}, Balachandran investigated the controllability condition for a neutral functional integro-differential system. Benchohra et al., \cite{benchohra2004controllability}, have utilized Schaefer's fixed point theorem to prove the controllability results of the impulsive functional differential inclusions with nonlocal conditions. Chang \cite{chang2007controllability}, derived sufficient conditions  for an impulsive functional differential system with infinite delay to be controllable. In \cite{ji2011controllability}, authors concerned the controllability of the impulsive functional differential equations with nonlocal conditions, and established sufficient conditions of the controllability by using the measure of noncompactness and Mönch fixed-point theorem. In \cite{PJY}, Balachandran et al., described sufficient conditions of the controllability for the impulsive neutral integro-differential systems having infinite delay. Lijuan et al. \cite{SLS}, proved complete controllability for a class of impulsive stochastic integro-differential system using Schaefer's fixed point theorem. Most of the articles in the literature on the controllability , explore the dynamical systems using traditional initial conditions, whereas the nonlocal conditions are more effective to describing countless practical systems. The concept of nonlocal conditions was first introduced by Byszewski in \cite{Byszewski91}, and established the existence and uniqueness of solution to a semilinear evolution equation. Several works have been reported for various systems with nonlocal conditions, see for instance, (\cite{benchohra2000existence, Byszewski91, LL, ntouyas1997global}, etc and the references therein.

Most of the paper discussed above have considered instantaneous impulses. However, there are very few works have been reported on the impulsive control system with non-instantaneous impulses. In \cite{ KA, MM, Muslim2018} etc, considered nonlinear control system with non-instantaneous impulses to establish existence and stability of solutions. Recently, Ahmed et al. \cite{HMA}, considered a non-instantaneous impulsive Hilfer fractional neutral stochastic integro-differential equation with fractional Brownian motion and nonlocal condition. They established the approximate controllability of the considered system by invoking the Sadovskii's fixed point theorem. In \cite{D Chalishjar}, sufficient conditions have been established for the total controllability of the second order semi-linear differential equation with infinite delay and non-instantaneous impulses. Muslim et al. \cite{MV, VM}, established the total controllability and observability results for a dynamic system with non-instantaneous impulses on time scales in finite dimensional space and some necessary and sufficient conditions of the total controllability results for a class of time-varying switched dynamical systems with impulses on time scales. The tools used for that study included Gramian matrix. Wang et al. \cite{WJ}, proved the controllability of fractional non-instantaneous nonlinear impulsive differential inclusions by embedding control only in the last time subinterval. Although, in this paper, we have applied the control for each sub interval of time, which gives rise to the essence of total controllability. Moreover, to the best of our knowledge, no work has been reported so far concerning the total controllability of functional semilinear differential systems with non-instantaneous impulses and nonlocal initial condition. The present study is motivated by this fact. This article is devoted to the total controllability for the nonlocal semilinear functional differential systems with non-instantaneous impulses of the form:
\begin{equation}\label{eqn:e1}
 \left\{
 \begin{aligned}
  \varkappa'(\theta)&=A\varkappa(\theta)+Bu(\theta)+\eta(\theta,\varkappa_\theta),\; \theta\in (\lambda_{j},\theta_{j+1}),\;  j=0,\dots,n, \\
 \varkappa(\theta)&=\nu_{j}(\theta,\varkappa(\theta^{-}_{j})),\;  \theta\in(\theta_{j},\lambda_{j}],\;  j=1,\dots,n, \\
 \varkappa(\theta)&=\phi(\theta),\; \theta \in [-\beta,0],\;\beta>0,\\
 \varkappa(0)&=\phi(0)+ \nu(\varkappa),
 \end{aligned}
 \right.
 \end{equation}
 where the state variable $ \varkappa(\cdot)$ lies in the Hilbert space $\mathbb{X}$ and the linear operator $A$ generates a $C_0$-semigroup $ \mathrm{T}(\theta) $ on $ \mathbb{X}$. The operator $ B : \mathbb{U} \to \mathbb{X} $ is a bounded linear operator and the control function $ u(\cdot) \in L^{2}(J;\mathbb{U}) $ where $ \mathbb{U} $ is also a Hilbert space. The function $\phi $ of the initial condition lies in the space $D$. Also, the nonlinear function $ \eta: J_{1}=\bigcup^{n}_{j=0}[\lambda_{j},\theta_{j+1}]\times D \rightarrow \mathbb{X} $, where 
\begin{align}\label{12}
 D:&=\{\phi: [-\beta,0]\rightarrow \mathbb{X}: \phi  \ \mbox{is piecewise continuous having jump discontinuity at finite number}\ \nonumber\\ &\qquad\mbox{of fixed points}\  \{\beta_0,\beta_1,........\beta_{l+1}\}\subset[-\beta,0], \ \mbox{such that}\ -\beta=\beta_0\leq\beta_1<\beta_2......<\beta_l\nonumber\\&\qquad<\beta_{l+1}=0 \},
\end{align} 

endowed with the norm $\|\phi\|_{D}=\frac{1}{\beta}\int_{-\beta}^{0}\|\phi(\kappa)\|_{\mathbb{X}}d\kappa $.
The non-instantaneous impulsive functions $ \nu_{j}(\theta,\varkappa(\theta^{-}_{j})),\ \theta\in(\theta_{j},\lambda_{j}], \ \mbox{for}\ j=1,\dots,n,$ will specify later and the fixed points $\lambda_i$ and $\theta_j$ satisfy $ 0 = \lambda_{0}=\theta_{0}<\theta_{1}<\lambda_1<\theta_2<,\dots,\theta_{n}<\lambda_n<\theta_{n+1}=b$. The right and left limit of $\varkappa(\cdot)$ at the point $\theta = \theta_j$ for $ j=1,\dots,n$,  exist and denoted by $\varkappa(\theta^{-}_{j})$ and $\varkappa(\theta^{+}_{j})$ respectively, also $\varkappa(\theta^{-}_{j})=\varkappa (\theta_{j})$. For every $\theta \in J=[0,b],$ the function $\varkappa_{\theta}(\kappa) = \varkappa(\theta+\kappa), \;-\beta\leq \kappa \leq 0$ such that $\varkappa_\theta\in D$. The function $\nu:PC(J_{\beta}=[-\beta,b];\mathbb{X})\rightarrow \mathbb{X}$.

The rest of the article is organized as follows: In section \ref{pre}, we provide the notations, definitions, assumptions and results which are required to prove the desired results. In section \ref{Result}, we establish sufficient conditions of the total controllability for the system \eqref{eqn:e1}. The total controllability results are also achieved for a system governed by an integro-differential equation  in Section \ref{sec4}. An illustration to demonstrate the implementation of the findings is also discussed in the last section.

\section{Preliminaries}\label{pre}\setcounter{equation}{0}
In the present section, we provide some fundamental definitions and required assumptions, which is useful to establish sufficient conditions for the total controllability of the system \eqref{eqn:e1}.
The norms in the state space $\mathbb{X}$ and control space $\mathbb{U}$ are denoted by $\|\cdot\|_{\mathbb{X}}$ and $\|\cdot\|_{\mathbb{U}}$ respectively. The space of all bounded linear operators from  $\mathbb{U}$ to  $\mathbb{X}$ is denoted by $\mathcal{L}(\mathbb{U};\mathbb{X})$ equipped with the norm $\|\cdot\|_{\mathcal{L}(\mathbb{U};\mathbb{X})}$. The notation $\mathcal{L}(\mathbb{X})$, represents the space of all bounded linear operators on $\mathbb{X}$ equipped with the norm $\|\cdot\|_{\mathcal{L}(\mathbb{X})}$.

First, we define the following operator \cite{curtain2012introduction}
\begin{equation}\label{resolvent}
\begin{aligned}
\Gamma^{\theta_{j+1}}_{\lambda_{j}}&:=\int_{\lambda_{j}}^{\theta_{j+1}} \mathrm{T}(\theta_{j+1}-\tau)BB^{*} \mathrm{T}(\theta_{j+1}-\tau)^{*}d\tau,\ j=0,\dots,n,
\end{aligned}
\end{equation}
where $ B^{*} $ and $  \mathrm{T}^{*}$ denote the adjoint operators of $ B $ and $  \mathrm{T}$ respectively. 
 
Let us define a set
\begin{align*}
PC(J; \mathbb{X}):&=\{\varkappa:J\rightarrow \mathbb{X} : \varkappa(\cdot) \ \mbox{is piecewise continuous with jump discontinuity at}\nonumber\\&\qquad\mbox{finite number of fixed points}\ \{\theta_0, \theta_1,......\theta_{j+1}\}\subset J\}, \mbox{satisfiying}\nonumber\\&\qquad 0=\theta_0\leq \theta_1,......\theta_j<\theta_{j+1}=b, \ \mbox{with}\ \varkappa(\theta^{-}_{j})= \varkappa(\theta_{j})\ \mbox{for}\ j=1,\dots,n\},
\end{align*}
and the norm on $PC(J;\mathbb{X})$ is defined by $$\|\psi\|_\mathrm{ PC}:=\sup\{\|\psi(\zeta)\|_{\mathbb{X}}, \ 0\leq \zeta \leq b\}.$$ 
The set $ PC(J; \mathbb{X})$ form a Banach space under the norm $\|\cdot\|_\mathrm{ PC} $.

Moreover, we also define a set  $$PC(J_{\beta}; \mathbb{X}):=\{\varkappa:J_{\beta}\rightarrow \mathbb{X}: x|_{\theta\in[-\beta,0)}\in D \ \mbox{and}\ x|_{\theta\in J}\in PC(J;\mathbb{X})\},$$ equipped with the norm $$\|x\|_\mathrm{PCD}=\frac{1}{\beta}\int_{-\beta}^{0}\|\phi(\kappa)\|_{\mathbb{X}}d\kappa+\sup\{\|\psi(\zeta)\|_{\mathbb{X}}, \ 0\leq \zeta \leq b\}.$$ 
It is easily verify that the set  $PC(J_{\beta}; \mathbb{X})$ form a Banach space under the norm $\|\cdot\|_\mathrm{PCD} $.

In order to examine the main results for the system (\ref{eqn:e1}), we impose the following assumptions:
\begin{Ass}\label{as2.1}
\begin{enumerate}
\item [(H1)] There exist constants $ K\geq 1$ and $M$ such that $\| \mathrm{T}(\theta)\|_{\mathcal{L}(\mathbb{X})}\leq K,$ and $\|\mathrm{B}\|_{\mathcal{L(\mathbb{U};\mathbb{X})}}=M$.
\item [(H2)] Let $ \eta : J_{1} \times D \rightarrow \mathbb{X},\ J_{1}=\bigcup^{n}_{j=0}[\lambda_{j},\theta_{j+1}] $ be a continuous function and  for some positive constant $K_{1}$, the following condition holds
$$\|\eta(\theta,\varkappa)-\eta(\theta,y)\|_{\mathbb{X}}\leq K_{1}\|\varkappa-y\|_{D},\ \forall \ \varkappa, y\in {D}, \ \theta\in{J_{1}}.$$
Moreover, there exists a positive constant $N$ such that
$$\|\eta(\theta,\varkappa)\|_{\mathbb{X}}\leq N,\ \forall\; \theta\in J_{1},\ \varkappa\in {D}.$$
\item [(H3)] The non-instantaneous impulses $ \nu_{j}\in \mathrm{C}(I_{j}\times\mathbb{X};\mathbb{X})$, and there exist positive constants $ L_{\nu_{j}},\ \mbox{for}\ j=1,\dots,n$ as
$$ \| \nu_{j}(\theta,\varkappa)-\nu_{j}(\theta,y)\|_{\mathbb{X}} \leq L_{\nu_{j}}\|\varkappa-y\|_{\mathbb{X}},\ \forall \ \theta\in I_{j}=[\theta_{j},\lambda_{j}],\ \varkappa,y\in\mathbb{X}.$$
Moreover, there exist positive constants $C_{\nu_{j}},\ \mbox{for}\ j=1,\dots,n,$ such that
$$\|\nu_{j}(\theta,\varkappa)\|_{\mathbb{X}}\leq C_{\nu_{j}},\ \forall \ \theta\in I_{j}=[\theta_{j},\lambda_{j}],\ \varkappa\in \mathbb{X}. $$
\item [(H4)] The operators $\Gamma^{\theta_{j+1}}_{\lambda_{j}},\ j=0,1,\dots, n,$ defined in \eqref{resolvent}, are invertible and there exist positive constants $\delta^{j}$, such that $\left\|(\Gamma^{\theta_{j+1}}_{\lambda_{j}})^{-1}\right\|_{\mathcal{L}(\mathbb{X})} \le\frac{1}{\delta^{j}},\ j=0,1,\dots, n.$
\item [(H5)] The function $ \nu: PC(J_{\beta};\mathbb{X})\rightarrow \mathbb{X}$ are continuous and there exists a positive constant $C_{\nu}$, such that
$$\|\nu(\varkappa)-\nu(y)\|_{\mathbb{X}}\leq C_{\nu}\|\varkappa-y\|_{\mathrm{PCD}}, \ \text{ for all }\varkappa,y\in PC(J_{\beta};\mathbb{X}).$$
$$ \|\nu(\varkappa)\|\le \bar{M},\ \ \text{ for all }\varkappa\in PC(J_{\beta};\mathbb{X}).$$
\end{enumerate}

\end{Ass}
In view of the articles \cite{AS,FM}, we give the following definition:
\begin{definition}\label{eqn2.3}
A function $ \varkappa:[-\beta,b]\rightarrow\mathbb{X}$ is called a mild solution of the system \eqref{eqn:e1}, if it satisfies the following relations:

\begin{equation}\label{def}
\varkappa(\theta)=\left\{
\begin{aligned}
&\phi(\theta),\ \theta\in[-\beta,0], \ \beta\textgreater 0,\\
&{ \mathrm{T}(\theta)}[\phi(0)+\nu(\varkappa)] + \int_{0}^{\theta} \mathrm{T}(\theta-\tau)\left[Bu(\tau)+\eta(\tau,x_\tau)\right]d\tau, \ \theta\in (0,\theta_{1}],\\
&\nu_{j}(\theta,\varkappa(\theta^{-}_{j})), \ \ \theta\in(\theta_{j},\lambda_{j}],\ j=1,\ldots,n, \\
& \mathrm{T}(\theta-\lambda_{j})(\nu_{j}(\lambda_{j},\varkappa(\theta^{-}_{j}))) + \int_{\lambda_{j}}^{\theta} \mathrm{T}(\theta-\tau)\left[Bu(\tau)+\eta(\tau,\varkappa_\tau)\right]d\tau,\\&\qquad\qquad\quad\ \theta\in (\lambda_{j},\theta_{j+1}],\ j=1,\ldots,n.
\end{aligned}
\right.
\end{equation}
\end{definition}
\begin{definition}
The impulsive system \eqref{eqn:e1} is said to be exactly controllable on $J$, if for the initial state $\varkappa(0)\in D$ and arbitrary final state $\varkappa^{b}\in \mathbb{X}$ there exists a control $u\in L^2(J, \mathbb{U})$, such that the mild solution \eqref{def} satisfies $\varkappa(b)=\varkappa^{b}$.
\end{definition}
Next, to generalize the above definition, we need the following setting.
\begin{definition}
The impulsive system (\ref{eqn:e1}) is said to be totally controllable on $J$, if for the initial
	state $\varkappa(0)\in D $ and arbitrary final state $ \zeta_{j}\in \mathbb{X}$ of each sub-interval $[\lambda_{j}, \theta_{j+1}]$ for $j = 0,\ldots,n$, there exists a
	control $u\in L^2(J,\mathbb{U}),$ such that the mild solution (\ref{def}) satisfies $\varkappa(\theta_{j+1}) = \zeta_{j}$ for $j = 0,\ldots,n$.
\end{definition}
\begin{rem}
If a system is totally controllable on $J$, then it is exactly controllable on $J$. However the converse is not true.
\end{rem}
\section{Total Controllability}\label{Result}
The total controllability of the system \eqref{eqn:e1}, is established in this section. To prove the total controllability of the system, first we obtain the bounds of the  feedback control function. In main theorem \ref{jt3}, the existence of mild solution and total controllability of the system is proved by invoking the Banach fixed point theorem.
	\begin{theorem}\label{jt3}
		Under the Assumptions (H1)-(H5), the control system \eqref{eqn:e1} is totally controllable on $J$, provided that
	\begin{align}\label{LF}
	&\max\bigg\{\max_{1\leq j \leq n}\bigg(\bigg(1+\frac{M^{2}K^{2}b}{\delta^{j}}\bigg)\bigg(KK_{1}\gamma b+ KL_{\nu_{j}}\bigg)\bigg),\nonumber\\&\qquad\qquad\bigg( \bigg(1+\frac{M^{2}K^{2}b}{\delta^{0}}\bigg)\bigg(K_{1}K\gamma b+KC_{\nu}\bigg)\bigg), \max_{1\leq j\leq n}L_{\nu_{j}}\bigg\}<1, \mbox{where}\ \gamma=\frac{b}{\beta}.
	\end{align}
		\end{theorem}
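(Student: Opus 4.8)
The plan is to establish total controllability by a single application of the Banach fixed point theorem on the space $PC(J_\beta;\mathbb{X})$, after first constructing, for every candidate state $\varkappa$, a feedback control that forces the prescribed endpoint value on each subinterval. First I would build the control. On a subinterval $(\lambda_j,\theta_{j+1}]$ with $j\geq 1$, exploiting the invertibility of the controllability operator $\Gamma^{\theta_{j+1}}_{\lambda_j}$ granted by (H4), I would set
\begin{equation*}
u_\varkappa(\theta)=B^{*}\mathrm{T}(\theta_{j+1}-\theta)^{*}\big(\Gamma^{\theta_{j+1}}_{\lambda_j}\big)^{-1}\Big[\zeta_j-\mathrm{T}(\theta_{j+1}-\lambda_j)\nu_j(\lambda_j,\varkappa(\theta_j^{-}))-\int_{\lambda_j}^{\theta_{j+1}}\mathrm{T}(\theta_{j+1}-\tau)\eta(\tau,\varkappa_\tau)\,d\tau\Big],
\end{equation*}
and an analogous control on $(0,\theta_1]$ with $\mathrm{T}(\theta_1)[\phi(0)+\nu(\varkappa)]$ replacing the impulse term and $\Gamma^{\theta_1}_{0}$ replacing $\Gamma^{\theta_{j+1}}_{\lambda_j}$. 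Substituting this control into the mild-solution representation \eqref{def} and using the defining identity \eqref{resolvent} for $\Gamma$, the Grammian factor cancels, and a direct check gives $\varkappa(\theta_{j+1})=\zeta_j$ for every $j=0,\dots,n$, which is precisely the total controllability requirement.

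Next I would define the solution operator $\Phi:PC(J_\beta;\mathbb{X})\to PC(J_\beta;\mathbb{X})$ by letting $(\Phi\varkappa)(\theta)$ be the right-hand side of \eqref{def} with $u$ replaced by the state-dependent control $u_\varkappa$ constructed above, and $(\Phi\varkappa)(\theta)=\phi(\theta)$ on $[-\beta,0]$. By construction any fixed point of $\Phi$ is a mild solution of \eqref{eqn:e1} whose associated control steers the state to $\zeta_j$ at each $\theta_{j+1}$, so it suffices to produce a unique fixed point. The uniform bounds in (H1)--(H3) and (H5) guarantee that $\Phi$ indeed maps $PC(J_\beta;\mathbb{X})$ into itself.

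The core of the argument is then to verify that $\Phi$ is a contraction for $\|\cdot\|_\mathrm{PCD}$. For $\varkappa,y\in PC(J_\beta;\mathbb{X})$ I would estimate $\|\Phi\varkappa-\Phi y\|$ on each of the three types of subinterval. On the impulse intervals $(\theta_j,\lambda_j]$ the difference equals $\nu_j(\theta,\varkappa(\theta_j^{-}))-\nu_j(\theta,y(\theta_j^{-}))$, which by (H3) is bounded by $L_{\nu_j}\|\varkappa-y\|_\mathrm{PCD}$, producing the term $\max_{j}L_{\nu_j}$ in \eqref{LF}. On $(\lambda_j,\theta_{j+1}]$ the difference splits into the impulse contribution, the Lipschitz contribution of $\eta$, and the control contribution $\int_{\lambda_j}^{\theta_{j+1}}\mathrm{T}(\theta_{j+1}-\tau)B(u_\varkappa-u_y)\,d\tau$; using (H1) and (H4) this last term carries the extra factor $\frac{M^{2}K^{2}b}{\delta^{j}}$ in front of the same impulse-plus-nonlinearity bound $KL_{\nu_j}+KK_1\gamma b$, where the history term is handled through the averaged delay norm $\|\cdot\|_{D}$ together with the interval length, which is what introduces the factor $\gamma=b/\beta$. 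Collecting these reproduces $\big(1+\frac{M^{2}K^{2}b}{\delta^{j}}\big)\big(KK_1\gamma b+KL_{\nu_j}\big)$, and the interval $(0,\theta_1]$ is treated identically with $\nu$ and $C_\nu$ in place of $\nu_j$ and $L_{\nu_j}$, giving the second term of \eqref{LF}.

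Taking the supremum over all subintervals bounds $\|\Phi\varkappa-\Phi y\|_\mathrm{PCD}$ by the left-hand side of \eqref{LF} times $\|\varkappa-y\|_\mathrm{PCD}$, so the smallness condition \eqref{LF} makes $\Phi$ a contraction and the Banach fixed point theorem yields the unique mild solution together with total controllability. I expect the main obstacle to be the bookkeeping in the contraction estimate on $(\lambda_j,\theta_{j+1}]$: the state-dependent control couples the target $\zeta_j$, the impulse value $\varkappa(\theta_j^{-})$ inherited from the previous interval, and the history segment $\varkappa_\tau$, so one must carefully track how each propagates through $\big(\Gamma^{\theta_{j+1}}_{\lambda_j}\big)^{-1}$ and the delayed argument of $\eta$ in order to land exactly on the constants appearing in \eqref{LF}. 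Particular care is needed with the averaged norm $\|\cdot\|_D$ so that the factor $\gamma=b/\beta$ emerges cleanly rather than a looser bound.
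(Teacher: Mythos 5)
Your proposal is correct and follows essentially the same route as the paper: the same Gramian-based feedback control $u_\varkappa$ built from $(\Gamma^{\theta_{j+1}}_{\lambda_j})^{-1}$ on each subinterval, the same verification that $\varkappa(\theta_{j+1})=\zeta_j$, and the same case-by-case contraction estimate (impulse intervals, $(0,\theta_1]$, and $(\lambda_j,\theta_{j+1}]$) yielding exactly the constants in \eqref{LF}, with the factor $\gamma=b/\beta$ arising from the averaged norm on $D$ just as in the paper's bound $\|\tilde\varkappa_\tau-\tilde y_\tau\|_D\le\gamma\|\varkappa-y\|_{PC}$. The only cosmetic difference is that you run the fixed-point argument on $PC(J_\beta;\mathbb{X})$ with $\|\cdot\|_{\mathrm{PCD}}$ while the paper works on a subset of $PC(J;\mathbb{X})$ and extends by $\phi$ on $[-\beta,0]$; since that extension is fixed, the two settings are equivalent.
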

	\begin{proof}
		Let $\alpha_{1}$ defined by
		\begin{align*}\label{eq:control}\alpha_{1}&=\max\bigg\{\bigg(MKQb +KNb+K\left[\|\phi(0)\|_{\mathbb{X}}+\bar{M}\right]\bigg),\max_{1\leq j\leq n}\bigg(MKQb +KNb+KC_{\nu_{j}}\bigg),\nonumber\\&\qquad\qquad \max_{1\leq j\leq n}C_{\nu_{j}}\bigg\}.
		\end{align*}
		Let $\mathcal{Z}:=\{\varkappa\in PC(J;\mathbb{X}):\varkappa(0)=\phi(0)+\nu(\varkappa)\}$ be the space endowed with the norm $ \|\cdot\|_{ PC}$. We define a set
		\begin{align*}
		\mathbb{W}:=\{\varkappa\in \mathcal{Z}: \|\cdot\|_{ PC}\leq \alpha_{1}\},
		\end{align*}
		where $\alpha_{1}$ is a positive constant. Let us define an operator $\xi:\mathcal{Z}\rightarrow\mathcal{Z}$ as
			\begin{align*}
		(\xi\varkappa)(\theta)=z(\theta),
		\end{align*}
		\begin{equation*}
		z(\theta)=
		\left\{
		\begin{aligned}
		\begin{array}{ll}
		 \mathrm{T}(\theta)[\phi(0)+(\nu(\tilde{\varkappa}))]& \\ \quad + \int_{0}^{\theta} \mathrm{T}(\theta-\tau)[Bu(\tau)+\eta(\tau,\tilde{\varkappa}_{\tau})]d\tau,
		\ \theta\in (0,\theta_{1}],\\
		\nu_{j}(\theta,\tilde{\varkappa}(\theta^{-}_{j})),\ \theta\in (\theta_{j},\lambda_{j}],\ j=1,\dots,n,\\
		 \mathrm{T}(\theta-\lambda_{j})\nu_{j}(\lambda_{j},\tilde{\varkappa}(\theta_{j}^{-}))& \\ \quad + \int_{\lambda_{j}}^{\theta} \mathrm{T}(\theta-\tau)[Bu(\tau)+\eta(\tau,\tilde{\varkappa}_{\tau})]d\tau,
		\ \theta\in (\lambda_{j},\theta_{j+1}],\ j=1,\dots,n.
		\end{array}
		\end{aligned}
		\right.
		\end{equation*}
Now, we prove the existence of a fixed point of the operator $\xi$ in the following steps.

{\bf Step 1:} First, let us  define the feedback control as
\begin{eqnarray}\label{eq}
	u(\theta) &=\sum_{j=0}^{n}u_{j}(\theta) \upchi_{ (\lambda_{j},\theta_{j+1}]}(\theta) ,\ \theta\in J,
\end{eqnarray}
where
\begin{eqnarray}
	u_{j}(\theta) &= B^{*} \mathrm{T}(\theta_{j+1}-\theta)^{*} (\Gamma^{\theta_{j+1}}_{\lambda_{j}})^{-1}p_{j}(\varkappa(\cdot)),\  \theta\in (\lambda_{j},\theta_{j+1}],\ j=0,\ldots,n,\nonumber
\end{eqnarray}
with
\begin{align*}
p_{0}(\varkappa(\cdot))&=\zeta_{0}- \mathrm{T}({\theta_{1}})(\phi(0)+(\nu(\tilde{\varkappa }))) -\int_{0}^{\theta_{1}} \mathrm{T}(\theta_{1}-\tau)\eta(\tau,\tilde{\varkappa}_\tau)d\tau,\nonumber\\
p_{j}(\varkappa(\cdot))&=\zeta_{j}- \mathrm{T}({\theta_{j+1}-\lambda_{j}})(\nu_{j}(\lambda_{j},\tilde{\varkappa}(\theta^{-}_{j}))) -\int_{\lambda_{j}}^{\theta_{j+1}} \mathrm{T}(\theta_{j+1}-\tau)\eta(\tau,\tilde{\varkappa}_\tau)d\tau\nonumber\\&\qquad\qquad\qquad\qquad\qquad\qquad\qquad\qquad\qquad \mbox{for}\ j=1,\ldots,n,
\end{align*}
	where, $\tilde{\varkappa}:J_{\beta}\rightarrow\mathbb{X}$ such that $\tilde{\varkappa }(\theta)=\phi(\theta), \theta \in [-\beta,0)$ and $\tilde{\varkappa }(\theta)=\varkappa(\theta), \theta \in J$, and $\zeta_{j}\in\mathbb{X}$ for $j=0,\ldots,n$ are arbitrary. Using \ref{def}, we compute
\begin{align}
\varkappa(\theta_{1})&= \mathrm{T}({\theta_{1}})[\phi(0)+(\nu(\tilde{\varkappa }))]+\int_{0}^{\theta_{1}} \mathrm{T}(\theta_{1}-\tau)[Bu_{0}(\tau)+\eta(\tau,\tilde{\varkappa}_\tau)]d\tau\nonumber\\	
&= \mathrm{T}({\theta_{1}})[\phi(0)+(\nu(\tilde{\varkappa }))] +\int_{0}^{\theta_{1}} \mathrm{T}(\theta_{1}-\tau)\eta(\tau,\tilde{\varkappa}_\tau)d\tau \nonumber\\&\quad+ (\Gamma^{\theta_{1}}_{0})(\Gamma^{\theta_{1}}_{0})^{-1}\bigg[\zeta_{0}- \mathrm{T}({\theta_{1}})[\phi(0)+(\nu(\tilde{\varkappa }))] \nonumber\\&\qquad-\int_{0}^{\theta_{1}} \mathrm{T}(\theta_{1}-\tau)\eta(\tau,\tilde{\varkappa}_\tau)d\tau\bigg]\nonumber\\
&=\zeta_{0}. \nonumber
\end{align}
Moreover, we estimate
\begin{align}
	\varkappa(\theta_{j+1})&= \mathrm{T}({\theta_{j+1}-\lambda_{j}})(\nu_{j}(\lambda_{j},\tilde{\varkappa}(\theta^{-}_{j}))) +\int_{\lambda_{j}}^{\theta_{j+1}} \mathrm{T}(\theta_{j+1}-\tau)[Bu(\tau)+\eta(\tau,\tilde{\varkappa}_\tau)]d\tau\nonumber\\	
	&= \mathrm{T}({\theta_{j+1}-\lambda_{j}})(\nu_{j}(\lambda_{j},\tilde{\varkappa}(\theta^{-}_{j}))) +\int_{\lambda_{j}}^{\theta_{j+1}} \mathrm{T}(\theta_{j+1}-\tau)\eta(\tau,\tilde{\varkappa}_\tau)d\tau \nonumber\\&\quad+ (\Gamma^{\theta_{j+1}}_{\lambda_{j}})(\Gamma^{\theta_{j+1}}_{\lambda_{j}})^{-1}\bigg[\zeta_{j}- \mathrm{T}({\theta_{j+1}-\lambda_{j}})(\nu_{j}(\lambda_{j},\tilde{\varkappa}(\theta^{-}_{j}))) \nonumber\\&\qquad-\int_{\lambda_{j}}^{\theta_{j+1}} \mathrm{T}(\theta_{j+1}-\tau)\eta(\tau,\tilde{\varkappa}_\tau)d\tau\bigg]\nonumber\\
	&=\zeta_{j},\ \mbox{for}\ j=1,\ldots,n.\nonumber
\end{align}
	Hence, the control function \eqref{eq} is suitable for the system \eqref{eqn:e1}. For $\theta\in(0,\theta_1]$, we evaluate 
\begin{align}\label{I}
\|u(\theta)\|_{\mathbb{U}}&\leq \|B^{*}\|_{\mathcal{L}(\mathbb{X};\mathbb{U})}\| \mathrm{T}(\theta_{	 1}-\theta)^{*}\|_{\mathcal{L}(\mathbb{X})}\|(\Gamma^{\theta_{1}}_{0})^{-1}\|_{\mathcal{L}(\mathbb{X})}\Bigg[\|\zeta_{0}\|_{\mathbb{X}}+K\|\phi(0)+(\nu(\tilde{\varkappa }))\|_{\mathbb{X}}\nonumber\\&\quad+\left\|\int_{0}^{\theta_{1}} \mathrm{T}(\theta_{1}-\tau)\eta(\tau,\tilde{\varkappa}_\tau)d\tau\right\|_{\mathbb{X}}\Bigg]\nonumber\\
&\leq \frac{MK}{\delta^{0}}\left[ \|\zeta_{0}\|_{\mathbb{X}}+K\left[\|\phi(0)\|_{\mathbb{X}}+\bar{M}\right] +KNb\right], \nonumber\\&\leq Q_{0}.
\end{align}
For $\theta\in(\lambda_j,\theta_{j+1}],\ j=1,\dots,n$, we estimate
\begin{align}\label{II}
\|u(\theta)\|_{\mathbb{U}}&\leq \|B^{*}\|_{\mathcal{L}(\mathbb{X};\mathbb{U})}\| \mathrm{T}(\theta_{j+	 1}-\theta)^{*}\|_{\mathcal{L}(\mathbb{X})}\|(\Gamma^{\theta_{j+1}}_{\lambda_{j}})^{-1}\|_{\mathcal{L}(\mathbb{X})}\Bigg[\|\zeta_{j}\|_{\mathbb{X}}+K\|\nu_{j}(\lambda_{j},\tilde{\varkappa}(\theta^{-}_{j}))\|_{\mathbb{X}}\nonumber\\&\quad+\left\|\int_{\lambda_{j}}^{\theta_{j+1}} \mathrm{T}(\theta_{j+1}-\tau)\eta(\tau,\tilde{\varkappa}_\tau)d\tau\right\|_{\mathbb{X}}\Bigg]\nonumber\\
&\leq \frac{MK}{\delta^{j}}\left[ \|\zeta_{j}\|_{\mathbb{X}}+KC_{\nu_{j}} +KNb\right].\nonumber\\
&\leq Q_{j}.
\end{align}
Combining \eqref{I} and \eqref{II}, we obtain 
\begin{align*}
\|u(\theta)\|_\mathbb{U}&\leq Q,\ \mbox{for}\ \theta\in J,  \ \mbox{where} \ Q=\max_{0\leq j\leq n}\{Q_{j}\}.
\end{align*}
{\bf Step 2:} Next, we prove that $\xi$ maps bounded set to bounded set. We need to show that for $\alpha_{1}>0$,  $\xi(\mathbb{W})\subseteq\mathbb{W} $. Let us take $\theta\in(\lambda_{j},\theta_{j+1}],\ j=1,\dots,n$ and $y\in \mathbb{W}$, we estimate
		\begin{align*}
			&\|(\xi y)(\theta)\|_{\mathbb{X}}\nonumber\\&\leq\|\mathrm{T}(\theta-\lambda_{j})\nu_{j}(\lambda_{j},\tilde{y}(\theta_{j}^{-}))\|_{\mathbb{X}}+\int_{\lambda_{j}}^{\theta}\| \mathrm{T}(\theta-\tau)Bu(\tau)\|_{\mathbb{X}}d\tau
		+\int_{\lambda_{j}}^{\theta}\| \mathrm{T}(\theta-\tau)\eta(\tau,\tilde{y}_{\tau})\|_{\mathbb{X}}d\tau\nonumber\\&\leq KC_{\nu_{j}}+ MKQb+KNb.
		\end{align*}
		Thus, we have
		\begin{align}\label{3.7}
		\|(\xi y)\|_{ PC}\leq\bigg ( MKQb+KNb+KC_{\nu_{j}}\bigg).
		\end{align}
Now, for $\theta\in[0,\theta_{1}]$ and $y\in \mathbb{W}$, we compute
\begin{align*}
	\|(\xi y)(\theta)\|_{\mathbb{X}}&\leq \| \mathrm{T}(\theta)[\phi(0)+(\nu(\tilde{y}))]\|_{\mathbb{X}}+\int_{0}^{\theta}\| \mathrm{T}(\theta-\tau)Bu(\tau)\|_{\mathbb{X}}d\tau+\int_{0}^{\theta}\| \mathrm{T}(\theta-\tau)\eta(\tau,\tilde{y}_{\tau})\|_{\mathbb{X}}d\tau\nonumber\\&\leq K\left[\|\phi(0)\|_{\mathbb{X}}+\bar{M}\right]+MKQb+KNb.
\end{align*}
	Hence, we obtain
	\begin{align}\label{3.8}
		\|(\xi y)\|_{ PC}\leq \bigg(MKQb +KNb+K\left[\|\phi(0)\|_{\mathbb{X}}+\bar{M}\right]\bigg).
	\end{align}
		Similarly for $\theta\in(\theta_{j},\lambda_{j}],\ j=1,\dots,n$ and $y\in \mathbb{W}$, we obtain
		\begin{align}\label{3.9}
			\|(\xi y)\|_{ PC}\leq C _{\nu_{j}}.
		\end{align}
		Summarizing inequalities \eqref{3.7}, \eqref{3.8}, \eqref{3.9}, we have
		\begin{align*}
			\|(\xi y)\|_{ PC}\leq \alpha_{1}.
		\end{align*}
		
		{\bf Step 3:}		Next, we will prove that the map $\xi$ is a contraction map. For this, let us take any  $\varkappa,y\in \mathbb{W}$, $\theta\in(\lambda_{j},\theta_{j+1}],\ j=1,\dots,n$, we compute
		\begin{align}\label{eqn:3.9}
		&\|(\xi \varkappa)(\theta)-(\xi y)(\theta)\|_{\mathbb{X}}\nonumber\\&\leq\| \mathrm{T}(\theta-\lambda_{j})[\nu_{j}(\lambda_{j},\tilde{\varkappa}(\theta_{j}^{-}))- \nu_{j}(\lambda_{j},\tilde{y}(\theta_{j}^{-})]\|_{\mathbb{X}}+ \int_{\lambda_{j}}^{\theta}\| \mathrm{T}(\theta-\tau)\|_{\mathcal{L}(\mathbb{X})}\|\eta(\tau,\tilde{\varkappa }_{\tau})-\eta(\tau,\tilde{y}_{\tau})\|_\mathbb{X}d\tau\nonumber\\&+\int_{\lambda_{j}}^{\theta}\| \mathrm{T}(\theta-\tau)\|_{\mathcal{L}(\mathbb{X})}\|B\|_{\mathcal{L}(\mathbb{U};\mathbb{X})}\|B^{*}\|_{\mathcal{L}(\mathbb{X};\mathbb{U})}\| \mathrm{T}(\theta_{j+1}-\tau)^{*}\|_{\mathcal{L}(\mathbb{X})}\|(\Gamma_{\lambda_{j}}^{\theta_{j+1}})^{-1}\|\|p_{j}(\varkappa)-p_{j}(y)\|_{\mathbb{X}}d\tau\nonumber\\&\leq \| \mathrm{T}(\theta-\lambda_{j})\|_{\mathcal{L}(\mathbb{X})}\|\nu_{j}(\lambda_{j},\tilde{\varkappa}(\theta_{j}^{-}))-\nu_{j}(\lambda_{j},\tilde{y}(\theta_{j}^{-})\|_{\mathbb{X}}+\int_{\lambda_{j}}^{\theta}KK_{1}\|\tilde{\varkappa }_{\tau}-\tilde{y}_{\tau}\|_{D}d\tau\nonumber\\&\qquad+ \int_{\lambda_{j}}^{\theta}\| \mathrm{T}(\theta-\tau)\|_{\mathcal{L}(\mathbb{X})}\| \mathrm{T}^{*}(\theta_{j+1}-\tau)\|_{\mathcal{L}(\mathbb{X})}
		\frac{M^{2}}{\delta^{j}}\|p_{j}(\varkappa)-p_{j}(y)\|_{\mathbb{X}}d\tau.
	\end{align}
	For $\tau \in J$, let us compute
			\begin{align}\label{eqn:3.8}
		 \|\tilde{\varkappa}_{\tau}-\tilde{y}_{\tau}\|_{D}&=\frac{1}{\beta}\int_{-\beta}^{0}\|\tilde{\varkappa}_{\tau}(\kappa)-\tilde{y}_{\tau}(\kappa)\|_{\mathbb{X}}d\kappa=\frac{1}{\beta}\int_{-\beta}^{0}\|\tilde{\varkappa}(\tau+\kappa)-\tilde{y}(\tau+\kappa)\|_{\mathbb{X}}d\kappa\nonumber\\&=\frac{1}{\beta}\int_{\tau-\beta}^{\tau}\|\tilde{\varkappa}(u)-\tilde{y}(u)\|_{\mathbb{X}}du,\ \text{ where }u=\tau+\kappa.\end{align}
	If $\tau\textless \beta$, then the expression  $\eqref{eqn:3.8}$ is express as
	  \begin{align*}
		   \|\tilde{\varkappa}_{\tau}-\tilde{y}_{\tau}\|_{D}&=\frac{1}{\beta}\int_{\tau-\beta}^{0}\|\tilde{\varkappa}(u)-\tilde {y}(u)\|_{\mathbb{X}}du+\frac{1}{\beta}\int_{0}^{\tau}\|\tilde{\varkappa}(u)-\tilde {y}(u)\|_{\mathbb{X}}du \nonumber\\&\leq\frac{1}{\beta}\int_{-\beta}^{0}\|\tilde{\varkappa}(u)-\tilde {y}(u)\|_{\mathbb{X}}du+\frac{1}{\beta}\int_{0}^{b}\|\tilde{\varkappa}(u)-\tilde {y}(u)\|_{\mathbb{X}}du\nonumber\\&\leq\frac{1}{\beta}\int_{0}^{b}\sup\|\varkappa(u)-y(u)\|_{\mathbb{X}}du\nonumber\\&\leq\|\varkappa-y\|_{ PC}\frac{1}{\beta}\int_{0}^{b} du\nonumber\\&\leq\frac{b}{ \beta}\|\varkappa-y\|_{ PC}\leq\gamma\|\varkappa-y\|_{ PC}, \mbox{ where } \gamma=\frac{b}{\beta}.
		\end{align*}
		If $\tau\geq \beta$, then the expression $\eqref{eqn:3.8}$ is express as
\begin{align*}
  \|\tilde{\varkappa}_{\tau}-\tilde{y}_{\tau}\|_{D}&\le\frac{1}{\beta}\int_{0}^{\tau}\|\tilde{\varkappa}(u)-\tilde {y}(u)\|_{\mathbb{X}}du \nonumber\\&\leq\frac{1}{\beta}\int_{0}^{b}\|\tilde{\varkappa}(u)-\tilde {y}(u)\|_{\mathbb{X}}du\nonumber\\&\leq\frac{1}{\beta}\int_{0}^{b}\sup\|\varkappa(u)-y(u)\|_{\mathbb{X}}du\nonumber\\&\leq\|\varkappa-y\|_{ PC}\frac{1}{\beta}\int_{0}^{b} du\nonumber\\&\leq\frac{b}{\beta}\|\varkappa-y\|_{ PC}\leq\gamma\|\varkappa-y\|_{ PC}.
\end{align*}
Thus, we have
\begin{align}\label{es}
	\|\tilde{\varkappa}_{\tau}-\tilde{y}_{\tau}\|_{D}\le&\gamma\|\varkappa-y\|_{ PC}, \tau\in J.
\end{align}
  Using  Assumptions \ref{as2.1}, we estimate
		\begin{align}\label{eqn:3.10}
		\|p_{j}(\varkappa)-p_{j}(y)\|_{\mathbb{X}}\le & KL_{\nu_{j}}\|\varkappa-y\|_{\mathbb{X}}+KK_{1}b\|\tilde{\varkappa}_{\tau}-\tilde{y}_{\tau}\|_{D}\nonumber\\
		\le  &KL_{\nu_{j}}\|\varkappa-y\|_{\mathbb{X}}+KK_{1}\gamma b\|\varkappa-y\|_{\mathbb{X}},\ \mbox{for}\ j=1,\ldots,n.
		\end{align}
		Combine \eqref{eqn:3.9}, \eqref{es} and \eqref{eqn:3.10}, we have
		\begin{align}
		&\|(\xi \varkappa)(\theta)-(\xi y)(\theta)\|_{\mathbb{X}}\nonumber\\&\leq KL_{\nu_{j}}\|\varkappa-y\|_{\mathbb{X}}+KK_{1}\gamma b\|\varkappa-y\|_{ PC} +\frac{M^{2}K^{2}b}{\delta^{j}}\bigg[KL_{\nu_{j}}\|\varkappa-y\|_{\mathbb{X}}\nonumber\\&\qquad\qquad+KK_{1}\gamma b\|\varkappa-y\|_{ PC}\bigg]\nonumber.
		\end{align}
		By using the above expression, we obtain
		\begin{align}\label{eqn:3.13}
	  &\|(\xi \varkappa)(\theta)-(\xi y)(\theta)\|_{\mathbb{X}}\leq\bigg(\bigg(1+\frac{M^{2}K^{2}b}{\delta^{j}}\bigg)\bigg( KK_{1}b\gamma+ KL_{\nu_{j}}\bigg)\bigg)\|\varkappa-y\|_{ PC}.
		\end{align}
		For $\theta\in[0,\theta_{1}]$ and $\varkappa,y\in \mathbb{W}$, we calculate
		\begin{align}\label{eqn:3.11}
	&\nonumber	\|(\xi \varkappa)(\theta)-(\xi y)(\theta)\|_{\mathbb{X}}\\&\leq \|\mathrm{T}(\theta)[(\nu(\tilde{\varkappa}))-(\nu(\tilde{y}))]\|_{\mathbb{X}}+\int_{0}^{\theta}\| \mathrm{T}(\theta-\tau)\|\|_{\mathcal{L}(\mathbb{X})}\eta(\tau,\tilde{\varkappa }_{\tau})-\eta(\tau,\tilde{y}_{\tau})\|_{\mathbb{X}}d\tau \nonumber\\&\quad+ \int_{0}^{\theta}\| \mathrm{T}(\theta-\tau)\|_{\mathcal{L}(\mathbb{X})}\|B\|_{\mathcal{L}(\mathbb{U;}\mathbb{X})}\|B^{*}\|_{\mathcal{L}(\mathbb{X};\mathbb{U})}\| \mathrm{T}(\theta_{1}-\tau)^{*}\|\|_{\mathcal{L}(\mathbb{X})}(\Gamma_{0}^{\theta_{1}})^{-1}\|\|p_{0}(\varkappa)-p_{0}(y)\|_{\mathbb{X}}d\tau\nonumber\\
			&\leq \|\mathrm{T}(\theta)\|_{\mathcal{L}(\mathbb{X})}\|(\nu(\tilde{\varkappa}))-(\nu(\tilde{y}))\|_{\mathbb{X}}+K_{1}\int_{0}^{\theta}\| \mathrm{T}(\theta-\tau)\|_{\mathcal{L}(\mathbb{X})}\|\tilde{\varkappa}_{\tau}-\tilde{y}_{\tau}\|_{D}d\tau\nonumber\\&\qquad + \frac{M^{2}}{\delta^{0}}\int_{0}^{\theta}\| \mathrm{T}(\theta-\tau)\|_{\mathcal{L}(\mathbb{X})}\| \mathrm{T}(\theta_{1}-\tau)^{*}\|_{\mathcal{L}(\mathbb{X})}\|p_{0}(\varkappa)-p_{0}(y)\|_{\mathbb{X}}d\tau.
		\end{align}
		Further, using Assumptions \ref{as2.1}, we evaluate
		\begin{align}\label{eqn:3.12}
		\|p_{0}(\varkappa)-p_{0}(y)\|_{\mathbb{X}}&\le KC_{\nu}\|\tilde{\varkappa}-\tilde{y}\|_{PCD}+KK_{1}b\|\tilde{\varkappa }_{\tau}-\tilde{y}_{\tau}\|_{D}\nonumber\\&\le KC_{\nu}\|\varkappa-y\|_{ PC}+KK_{1}\gamma b\|\varkappa-y\|_{ PC}.
		\end{align}
		Combine \eqref{es}, \eqref{eqn:3.11} and \eqref{eqn:3.12}, we obtain
		\begin{align}\label{eqn:3.14}
		\|(\xi \varkappa)(\theta)-(\xi y)(\theta)\|_{\mathbb{X}}&\leq\bigg( \bigg(1+\frac{M^{2}K^{2}b}{\delta^{0}}\bigg)\bigg(K_{1}K\gamma b+KC_{\nu}\bigg)\bigg)\|\varkappa-y\|_{ PC}.
		\end{align}
		Similarly for $\theta\in(\theta_{j},\lambda_{j}],\ j=1,\dots,n$ and $\varkappa,y\in \mathbb{W}$, we compute
		\begin{align}
		\|(\xi \varkappa)(\theta)-(\xi y)(\theta)\|_{\mathbb{X}} \leq&\|\nu_{j}(\theta,\tilde{\varkappa }(\theta^{-}_{j}))-\nu_{j}(\theta,\tilde{y}(\theta^{-}_{j}))\|_{\mathbb{X}}\nonumber\\ 	&\leq L_{\nu_{j}}\|\tilde{\varkappa }(\theta^{-}_{j})-\tilde{y}(\theta^{-}_{j})\|_{\mathbb{X}} \nonumber\\
		&\leq L_{\nu_{j}}\|\varkappa-y\|_{\mathbb{X}}.\nonumber
		\end{align}
		Moreover, by the above estimate, we obtain
		\begin{align}\label{eqn:3.15}
		\|(\xi \varkappa)(\theta)-(\xi y)(\theta)\|_{\mathbb{X}}\leq& L_{\nu_{j}}\|\varkappa-y\|_{ PC}.
		\end{align}
		After summarizing the inequalities \eqref{eqn:3.13},\eqref{eqn:3.14} and \eqref{eqn:3.15}, we have
		\begin{align}\label{contr}
		\|(\xi \varkappa)-(\xi y)\|_{ PC}\leq L_{F}\|\varkappa-y\|_{ PC},
		\end{align}
		where
		\begin{align*}
		L_{F}&=\max\bigg\{\max_{1\leq j \leq n}\bigg(\bigg(1+\frac{M^{2}K^{2}b}{\delta^{j}}\bigg)\bigg(KK_{1}\gamma b+ KL_{\nu_{j}}\bigg)\bigg),\bigg( \bigg(1+\frac{M^{2}K^{2}b}{\delta^{0}}\bigg)\\&\qquad\qquad\qquad\bigg(K_{1}K\gamma b+KC_{\nu}\bigg)\bigg), \max_{1\leq j\leq n}L_{\nu_{j}}\bigg\}.
		\end{align*}
	From \eqref{contr} and \eqref{LF}, we conclude	that $\xi$ is a contraction map. This prove the existence of a unique fixed point of $\xi$. Which is a mild solution of (\ref{eqn:e1}). Therefore, the system (\ref{eqn:e1}) is totally controllable.
	\end{proof}
	
\section{Total Controllability Of Integro-Differential Equation}\label{sec4}
In \cite{RK}, Chalishajar et al., studied the trajectory controllability of an abstract nonlinear integro-differential system in the finite and infinite dimensional space. Bahuguna et al. \cite{RD}, established the sufficient conditions for the existence and uniqueness of piecewise continuous mild solutions to fractional integro-differential equations in a Banach space with non-instantaneous impulses using the fixed point theorem. In \cite{ZH}, Zhu et al., considered the initial boundary value problem for a class of nonlinear fractional partial integro-differential equations of mixed type with non-instantaneous impulses in Banach spaces.  Sufficient conditions of existence and uniqueness of mild solutions for the equations are obtained via general Banach contraction mapping principal. 

Here, we consider a functional integro-differential control system with non-instantaneous impulse in Hilbert Space $\mathbb{X}$.
 \begin{equation}\label{eqn:e4}
\left\{
\begin{aligned}
\varkappa'(\theta)&=A\varkappa(\theta)+Bu(\theta)+\int_{0}^{\theta}\kappa(\theta-s)q(s,\theta_{s})ds,\; \theta\in (\lambda_{j},\theta_{j+1}),\;  j=0,\dots,n, \\
\varkappa(\theta)&=\nu_{j}(\theta,\varkappa(\theta^{-}_{j})),\;  \theta\in(\theta_{j},\lambda_{j}],\;  j=1,\dots,n, \\
 \varkappa(\theta)&=\phi(\theta),\; \theta \in [-\beta,0],\;\theta>0,
\end{aligned}
\right.
\end{equation}
where $\varkappa(\cdot)$ is the state variable, $u(\cdot)\in L^2(J;\mathbb{U})$ is the control function, $\mathbb{U}$ is a Hilbert space, $B\in \mathcal{L}(\mathbb{U};\mathbb{X})$. In order to prove the  total controllability of \eqref{eqn:e4}, we need the following additional assumptions:
\begin{Ass}\label{as4.1}
	
	\begin{enumerate}
		\item [(A1)]$ \kappa _{b}=\int_{0}^{\theta}|\kappa(s)|ds $.
		\item [(A2)]  The continuous function $ q: J_{1} \times D \rightarrow \mathbb{X},\ J_{1}=\bigcup^{n}_{j=0}[\lambda_{j},\theta_{j+1}]$,  satisfies
		Lipschitz continuity condition, there exists a positive constant $L_{q}$ such that
		$$\|q(\theta,\varkappa)-q(\theta,y)\|_{\mathbb{X}}\leq L_{q}\|\varkappa-y\|_{D}, \forall \;\varkappa,y\in D,\ \theta\in{J_{1}}.$$
		Moreover, there exists $S>0$ such that
		$$\|q(\theta,\varkappa)\|_{\mathbb{X}}\leq S, \forall\; \theta\in J_{1}, \ \mbox{for}\ \varkappa\in D.$$
		\end{enumerate}
	\end{Ass}
\begin{definition}\label{definition}
	A function $ \varkappa\in PC([-\beta,b];\mathbb{X})$ is called a mild solution of the system \eqref{eqn:e4}, if the following relations are satisfied
	\begin{equation}\label{eqn4.4}
	\varkappa(\theta)=\left\{
	\begin{aligned}
	&\phi(\theta),\ \ \theta \in [-\beta,0],\;\beta>0,\\
	&\nu_{j}(\theta,\varkappa(\theta^{-}_{j})), \ \ \theta\in(\theta_{j},\lambda_{j}],\ j=1,\ldots,n,\\
	& \mathrm{T}(\theta)\phi(0)+ \int_{0}^{\theta} \mathrm{T}(\theta-\tau)\left[Bu(\tau)+\int_{0}^{\tau}\kappa(\tau-\eta)q(\eta,\varkappa_{\eta})d\eta\right]d\tau, \ \theta\in (0,\theta_{1}],\\
	& \mathrm{T}(\theta-\lambda_{j})(\nu_{j}(\lambda_{j},\varkappa(\theta^{-}_{j}))) + \int_{\lambda_{j}}^{\theta}T(\theta-\tau) \Big[Bu(\tau) \\& \quad+ \int_{0}^{\tau}\kappa(\tau-\eta)q(\eta,\varkappa_{\eta})d\eta\Big]d\tau, \theta\in (\lambda_{j},\theta_{j+1}],\ j=1,\ldots,n.
\end{aligned}
	\right.
	\end{equation}
\end{definition}

	\begin{theorem}\label{jt6}
	If the assumptions (H1),(H3)-(H4) and (A1)-(A2) are fulfilled, then the control system \eqref{eqn:e4} is totally controllable provided that
	\begin{align}\label{LF1}
	&\max\bigg\{\max_{1\leq j \leq n}\bigg(\bigg( KL_{\nu_{j}}+KL_{q}\kappa_{b}\gamma b\bigg) \bigg(1+\frac{M^{2}K^{2}b}{\delta^{j}}\bigg)\bigg), \nonumber\\ &\qquad\qquad\bigg(1+\frac{M^{2}K^{2}b}{\delta^{0}}\bigg)KL_{q}\kappa_{b}\gamma b,\ \max_{1\leq j \leq n}L_{\nu_{j}}\bigg\}<1, \mbox{where}\ \gamma=\frac{b}{\beta}.
	\end{align}
\end{theorem}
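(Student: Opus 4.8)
The plan is to reproduce the three-step Banach-fixed-point scheme of Theorem \ref{jt3}, adapting every estimate to replace the nonlinearity $\eta(\tau,\varkappa_\tau)$ by the convolution term $\int_{0}^{\tau}\kappa(\tau-\eta)q(\eta,\varkappa_{\eta})d\eta$ and to drop the nonlocal term $\nu(\varkappa)$, which is absent from \eqref{eqn:e4}. First I would define the feedback control block by block as
$$u_{j}(\theta)=B^{*}\mathrm{T}(\theta_{j+1}-\theta)^{*}(\Gamma^{\theta_{j+1}}_{\lambda_{j}})^{-1}p_{j}(\varkappa(\cdot)),\quad \theta\in(\lambda_{j},\theta_{j+1}],$$
with $p_{0}(\varkappa)=\zeta_{0}-\mathrm{T}(\theta_{1})\phi(0)-\int_{0}^{\theta_{1}}\mathrm{T}(\theta_{1}-\tau)\int_{0}^{\tau}\kappa(\tau-\eta)q(\eta,\tilde\varkappa_{\eta})d\eta\,d\tau$ and, for $j\ge 1$, $p_{j}(\varkappa)=\zeta_{j}-\mathrm{T}(\theta_{j+1}-\lambda_{j})\nu_{j}(\lambda_{j},\tilde\varkappa(\theta^{-}_{j}))-\int_{\lambda_{j}}^{\theta_{j+1}}\mathrm{T}(\theta_{j+1}-\tau)\int_{0}^{\tau}\kappa(\tau-\eta)q(\eta,\tilde\varkappa_{\eta})d\eta\,d\tau$. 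Invertibility of $\Gamma^{\theta_{j+1}}_{\lambda_{j}}$ from (H4) then forces $\varkappa(\theta_{j+1})=\zeta_{j}$ for every $j$, by the same cancellation $(\Gamma)(\Gamma)^{-1}=\mathrm{Id}$ used in Theorem \ref{jt3}, so this control realizes total controllability on each subinterval.

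Next I would bound the control. Using (H1), (H4), the bound $\|q\|_{\mathbb{X}}\le S$ from (A2) and $\kappa_{b}=\int_{0}^{\theta}|\kappa(s)|ds$ from (A1), the inner convolution satisfies $\|\int_{0}^{\tau}\kappa(\tau-\eta)q(\eta,\tilde\varkappa_{\eta})d\eta\|_{\mathbb{X}}\le \kappa_{b}S$, which yields uniform bounds $\|u(\theta)\|_{\mathbb{U}}\le Q_{j}$ and $Q=\max_{0\le j\le n}Q_{j}$. The operator $\xi$ given by the right-hand side of \eqref{eqn4.4} is then shown to map a ball $\mathbb{W}\subset PC(J;\mathbb{X})$ of a suitable radius $\alpha_{1}$ into itself (Step 2), where $\alpha_{1}$ is assembled from $MKQb$, $K\kappa_{b}Sb$-type terms, $K\|\phi(0)\|_{\mathbb{X}}$ and the $C_{\nu_{j}}$, exactly as before.

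The decisive step is the contraction estimate. For $\theta\in(\lambda_{j},\theta_{j+1}]$ and $\varkappa,y\in\mathbb{W}$ I would split $\|(\xi\varkappa)(\theta)-(\xi y)(\theta)\|_{\mathbb{X}}$ into the impulse difference, the convolution difference, and the control difference, as in \eqref{eqn:3.9}. The crucial new ingredient is the bound
$$\Big\|\int_{0}^{\tau}\kappa(\tau-\eta)[q(\eta,\tilde\varkappa_{\eta})-q(\eta,\tilde y_{\eta})]d\eta\Big\|_{\mathbb{X}}\le L_{q}\kappa_{b}\gamma\|\varkappa-y\|_{ PC},$$
which combines the Lipschitz condition (A2), the kernel bound (A1), and the delay-shift estimate \eqref{es}. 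Feeding this into $\|p_{j}(\varkappa)-p_{j}(y)\|_{\mathbb{X}}\le KL_{\nu_{j}}\|\varkappa-y\|_{ PC}+KL_{q}\kappa_{b}\gamma b\|\varkappa-y\|_{ PC}$ and adding the direct convolution term produces the block constant $(1+M^{2}K^{2}b/\delta^{j})(KL_{\nu_{j}}+KL_{q}\kappa_{b}\gamma b)$. On $(0,\theta_{1}]$ the absence of $\nu(\varkappa)$ and the cancellation of the $\mathrm{T}(\theta)\phi(0)$ term leave only $(1+M^{2}K^{2}b/\delta^{0})KL_{q}\kappa_{b}\gamma b$, while on the impulse intervals $(\theta_{j},\lambda_{j}]$ (H3) gives the constant $L_{\nu_{j}}$. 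Taking the maximum of the three yields precisely the Lipschitz constant $L_{F}$ of \eqref{LF1}, so the hypothesis $L_{F}<1$ makes $\xi$ a contraction and the Banach fixed point theorem supplies the unique mild solution, establishing total controllability. I expect the main obstacle to be purely bookkeeping rather than conceptual: carefully propagating the double-integral bound through each $p_{j}$ and tracking the $\kappa_{b}$ and $\gamma$ factors so that the final constant matches \eqref{LF1} exactly.
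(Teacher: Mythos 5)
Your proposal is correct and follows essentially the same route as the paper's proof: the same block-wise feedback control built from $(\Gamma^{\theta_{j+1}}_{\lambda_j})^{-1}$ (the paper calls your $p_j$ by the name $h_j$ and your $Q$ by $R$), the same uniform control bound via $\kappa_b S$, the same self-mapping step, and the same contraction estimate combining (A1)--(A2) with the delay-shift bound \eqref{es} to arrive at exactly the constant in \eqref{LF1}. The only differences are notational (the paper names the fixed-point operator $\delta$ rather than $\xi$), so there is nothing substantive to add.
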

	\begin{proof}
		Let $\alpha_{2}$ defined by
	\begin{align*}\label{eqn:control}\alpha_{2}&=\max\bigg\{\bigg(K\|\phi(0)\|_{\mathbb{X}} +MKRb +KSb\kappa_{b}\bigg), \max_{1\leq j\leq n}\bigg(KC_{\nu_{j}}\\&\qquad\qquad+  MKRb+KSb\kappa_{b}\bigg), \ \max_{1\leq j\leq n}C_{\nu_{j}}\bigg\}.
	\end{align*}
		Let $\mathcal{A}:=\{\varkappa\in PC(J;\mathbb{X}):\varkappa(0)=\phi(0)\}$ be the space endowed with the norm $ \|\cdot\|_{ PC}$. We define a set
	\begin{align*}
	\mathbb{Y}:=\{\varkappa\in \mathcal{A}: \|\cdot\|_{ PC}\leq \alpha_{2}\},
	\end{align*}
	where $\alpha_{2}$ is a positive constant. Let us define an operator $\delta:\mathcal{A}\rightarrow\mathcal{A}$ as
	\begin{align*}
	(\delta\varkappa)(\theta)=z(\theta),
	\end{align*}
		\begin{equation}
	z(\theta)=
	\left\{
	\begin{aligned}
	\begin{array}{ll}
	\mathrm{T}(\theta)\phi(0)+ \int_{0}^{\theta} \mathrm{T}(\theta-\tau)\left[Bu(\tau)+\int_{0}^{\tau}\kappa(\tau-\eta)q(\eta,\tilde{y}_{\eta})d\eta\right]d\tau, \ \theta\in (0,\theta_{1}],\\
		\nu_{j}(\theta,\tilde{y}(\theta_{j}^{-})),\ \theta\in (\theta_{j},\lambda_{j}],\ j=1,\dots,n,\\
	  \mathrm{T}(\theta-\lambda_{j})(\nu_{j}(\lambda_{j},\tilde{y}(\theta^{-}_{j}))) +\int_{\lambda_{j}}^{\theta} \mathrm{T}(\theta-\tau)\left[Bu(\tau)+\int_{0}^{\tau}\kappa(\tau-\eta)q(\eta,\tilde{y}_{\eta})d\eta\right]d\tau,
\\\qquad\qquad\qquad\qquad	\ \theta\in (\lambda_{j},\theta_{j+1}],\ j=1,\dots,n.
	\end{array}
	\end{aligned}
	\right.\nonumber
	\end{equation}
 Now, we prove the existence of a fixed point of the operator $\delta$ in the following steps.\\
{\bf Step 1}: First, let us  define the feedback control as
\begin{eqnarray}\label{eqn}
u(\theta) &=\sum_{j=0}^{n}u_{j}(\theta) \upchi_{ (\lambda_{j},\theta_{j+1}]}(\theta) ,\ \theta\in J,
\end{eqnarray}
where
\begin{eqnarray}
u_{j}(\theta) &= B^{*} \mathrm{T}(\theta_{j+1}-\theta)^{*} (\Gamma^{\theta_{j+1}}_{\lambda_{j}})^{-1}h_{j}(\varkappa(\cdot)),\  \theta\in (\lambda_{j},\theta_{j+1}],\ j=0,\ldots,n,\nonumber
\end{eqnarray}
with
\begin{align*}
h_{0}(\varkappa(\cdot))&=\zeta_{0}- \mathrm{T}({\theta_{1}})\phi(0) -\int_{0}^{\theta_{1}} \mathrm{T}(\theta_{1}-\tau)\int_{0}^{\tau}\kappa(\tau-\eta)q(\eta,\tilde{\varkappa }_{\eta})d\eta d\tau,\nonumber\\
h_{j}(\varkappa(\cdot))&=\zeta_{j}- \mathrm{T}({\theta_{j+1}-\lambda_{j}})(\nu_{j}(\lambda_{j},\tilde{\varkappa }(\theta^{-}_{j})))-\int_{\lambda_{j}}^{\theta_{j+1}} \mathrm{T}(\theta_{j+1}-\tau)\int_{0}^{\tau}\kappa(\tau-\eta)q(\eta,\tilde{\varkappa }_{\eta})d\eta d\tau\nonumber\\&\qquad\qquad\qquad\qquad\qquad\qquad\qquad\qquad\qquad\qquad\qquad\qquad\qquad \mbox{for}\ j=1,\ldots,n,
\end{align*}
where, $\tilde{\varkappa}:J_{\beta}\rightarrow\mathbb{X}$ such that $\tilde{\varkappa }(\theta)=\phi(\theta), \theta \in [-\beta,0)$ and $\tilde{\varkappa }(\theta)=\varkappa(\theta), \theta \in J$, and $\zeta_{j}\in\mathbb{X}$ for $j=0,\ldots,n$, are arbitrary. Using \eqref{eqn4.4}, we compute
\begin{align}
\nonumber	\varkappa(\theta_{1})&= \mathrm{T}({\theta_{1}})\phi(0)+\int_{0}^{\theta_{1}} \mathrm{T}(\theta_{1}-\tau)\int_{0}^{\tau}\kappa(\tau-\eta)q(\eta,\tilde{\varkappa }_{\eta})d\eta d\tau\nonumber\\\nonumber &\qquad+\int_{0}^{\theta_{1}} \mathrm{T}(\theta_{1}-\tau)Bu(\tau)d\tau\nonumber\\	
&= \mathrm{T}({\theta_{1}})\phi(0)+\int_{0}^{\theta_{1}} \mathrm{T}(\theta_{1}-\tau)\int_{0}^{\tau}\kappa(\tau-\eta)q(\eta,\tilde{\varkappa }_{\eta})d\eta d\tau\nonumber\\&\qquad+ (\Gamma^{\theta_{1}}_{0})(\Gamma^{\theta_{1}}_{0})^{-1}\bigg[\zeta_{0}- \mathrm{T}({\theta_{1}})\phi(0)-\int_{0}^{\theta_{1}} \mathrm{T}(\theta_{1}-\tau)\int_{0}^{\tau}\kappa(\tau-\eta)q(\eta,\tilde{\varkappa }_{\eta})d\eta d\tau\bigg]\nonumber\\
&=\zeta_{0}. \nonumber
\end{align}
Moreover, we estimate
\begin{align}
\nonumber	\varkappa(\theta_{j+1})&= \mathrm{T}({\theta_{j+1}-\lambda_{j}})(\nu_{j}(\lambda_{j},\tilde{\varkappa }(\theta^{-}_{j}))) +\int_{\lambda_{j}}^{\theta_{j+1}} \mathrm{T}(\theta_{j+1}-\tau)\int_{0}^{\tau}\kappa(\tau-\eta)q(\eta,\tilde{\varkappa }_{\eta})d\eta d\tau\nonumber\\\nonumber &\qquad+\int_{\lambda_{j}}^{\theta_{j+1}} \mathrm{T}(\theta_{j+1}-\tau)Bu(\tau)d\tau\nonumber\\	
&= \mathrm{T}({\theta_{j+1}-\lambda_{j}})(\nu_{j}(\lambda_{j},\tilde{\varkappa }(\theta^{-}_{j}))) +\int_{\lambda_{j}}^{\theta_{j+1}} \mathrm{T}(\theta_{j+1}-\tau)\int_{0}^{\tau}\kappa(\tau-\eta)q(\eta,\tilde{\varkappa }_{\eta})d\eta d\tau\nonumber\\&\qquad+ (\Gamma^{\theta_{j+1}}_{\lambda_{j}})(\Gamma^{\theta_{j+1}}_{\lambda_{j}})^{-1}\bigg[\zeta_{j}- \mathrm{T}({\theta_{j+1}-\lambda_{j}})(\nu_{j}(\lambda_{j},\tilde{\varkappa }(\theta^{-}_{j})))\nonumber\\&\qquad-\int_{\lambda_{j}}^{\theta_{j+1}} \mathrm{T}(\theta_{j+1}-\tau)\int_{0}^{\tau}\kappa(\tau-\eta)q(\eta,\tilde{\varkappa }_{\eta})d\eta d\tau\bigg]\nonumber\\
&=\zeta_{j}, \ \mbox{for}\, j=1,\ldots,n. \nonumber
\end{align}
Hence, the control function \eqref{eqn} is suitable for the system \eqref{eqn:e4}. For $\theta \in (0,\theta_{1}]$, we estimate
\begin{align}\label{A}
\|u(\theta)\|_\mathbb{U}&\leq \|B^{*}\|_{\mathcal{L}(\mathbb{X});\mathbb{U}}\| \mathrm{T}(\theta_{	 1}-\theta)^{*}\|_{\mathcal{L}(\mathbb{X})}\|(\Gamma^{\theta_{1}}_{0})^{-1}\|\Bigg[\|\zeta_{0}\|_{\mathbb{X}}+K\|\phi(0)\|_{\mathbb{X}}\nonumber\\&\quad+\left\|\int_{0}^{\theta_{1}} \mathrm{T}(\theta_{1}-\tau)\int_{0}^{\tau}\kappa(\tau-\eta)q(\eta,\tilde{\varkappa }_{\eta})d\eta d\tau\right\|_{\mathbb{X}}\Bigg]\nonumber\\
&\leq \frac{MK}{\delta^{0}}\left[ \|\zeta_{0}\|_{\mathbb{X}}+K\|\phi(0)\|_{\mathbb{X}} +K
	Sb\kappa_{b}\right],\nonumber\\
&\leq R_{0}.
\end{align}
For $\theta\in (\lambda_{j},\theta_{j+1}]\ \mbox{with}\ j=1,\dots,n,$ we estimate
\begin{align}\label{B}
\|u(\theta)\|_\mathbb{U}&\leq \|B^{*}\|_{\mathcal{L}(\mathbb{X});\mathbb{U}}\| \mathrm{T}(\theta_{j+	 1}-\theta)^{*}\|_{\mathcal{L}(\mathbb{X})}\|(\Gamma^{\theta_{j+1}}_{\lambda_{j}})^{-1}\|\Bigg[\|\zeta_{j}\|_{\mathbb{X}}+K\|\nu_{j}(\lambda_{j},\tilde{\varkappa }(\theta^{-}_{j}))\|_{\mathbb{X}}\nonumber\\&\quad+\left\|\int_{\lambda_{j}}^{\theta_{j+1}} \mathrm{T}(\theta_{j+1}-\tau)\int_{0}^{\tau}\kappa(\tau-\eta)q(\eta,\tilde{\varkappa }_{\eta})d\eta d\tau\right\|_{\mathbb{X}}\Bigg]\nonumber\\
&\leq \frac{MK}{\delta^{j}}\left[ \|\zeta_{j}\|_{\mathbb{X}}+KC_{\nu_{j}} +K
	Sb\kappa_{b}\right],\nonumber\\
&\leq R_{j}.
\end{align}
Combining \eqref{A} and \eqref{B}, we obtain 
\begin{align*}
\|u(\theta)\|_\mathbb{U}&\leq R,\ \mbox{for}\ \theta\in J,  \ \mbox{where} \ R=\max_{0\leq j\leq n}\{R_{j}\}.
\end{align*}
	{\bf Step 2}: Here, we show that $ \delta(\mathbb{Y})$ is bounded. Let us take $\theta\in(\lambda_{j},\theta_{j+1}],\ j=1,\dots,n$ and $y\in \mathbb{Y}$, we estimate
	 \begin{align*}
	 \|	(\delta y)(\theta)\|_{\mathbb{X}}&\leq\|\mathrm{T}(\theta-\lambda_{j})(\nu_{j}(\lambda_{j},\tilde{y}(\theta^{-}_{j})))\|_{\mathbb{X}} +\int_{\lambda_{j}}^{\theta}\| \mathrm{T}(\theta-\tau)Bu(\tau)\|_{\mathbb{X}}d\tau\\&\quad+\int_{\lambda_{j}}^{\theta}\left\| \mathrm{T}(\theta-\tau)\left[\int_{0}^{\tau}\kappa(\tau-\eta)q(\eta,\tilde{y}_{\eta})d\eta\right]\right\|_{\mathbb{X}}d\tau.\nonumber\\&\leq KC_{\nu_{j}}+ MKRb+KSb\kappa_{b}.
	 \end{align*}
	Thus, we have
	\begin{align}\label{4.25}
	\|	(\delta y)\|_{ PC}\leq\bigg (KC_{\nu_{j}}+  MKRb+KSb\kappa_{b}\bigg).
	\end{align}
	Now, for $\theta\in[0,\theta_{1}]$ and $y\in \mathbb{Y}$, we compute
	\begin{align*}
	&\|	(\delta y)(\theta)\|_{\mathbb{X}}\nonumber\\&\leq 	\|\mathrm{T}(\theta)\|\|\phi(0)\|_{\mathbb{X}}+\int_{0}^{\theta}\| \mathrm{T}(\theta-\tau)Bu(\tau)\|_{\mathbb{X}}d\tau+\int_{0}^{\theta}\left\| \mathrm{T}(\theta-\tau)\left[\int_{0}^{\tau}\kappa(\tau-\eta)q(\eta,\tilde{y}_{\eta})d\eta\right]\right\|_{\mathbb{X}}d\tau\nonumber\\&\leq K\|\phi(0)\|_{\mathbb{X}} + MKRb +KSb\kappa_{b}.
	\end{align*}
	Hence, we get
	\begin{align}\label{4.26}
	\|	(\delta y)\|_{ PC}\leq \bigg(K\|\phi(0)\|_{\mathbb{X}} +MKRb +KSb\kappa_{b}\bigg).
	\end{align}
		Similarly for $\theta\in(\theta_{j},\lambda_{j}],\ j=1,\dots,n$ and $y\in 	\mathbb{Y}$, we get
	\begin{align}\label{4.27}
	\|	( \delta y)\|_{ PC}\leq C _{\nu_{j}}.
	\end{align}
	Using the inequalities \eqref{4.25}, \eqref{4.26} and \eqref{4.27}, we obtain
	\begin{align}
	\| (\delta y)\|_{ PC}\leq \alpha_{2}\nonumber.
	\end{align}
			{\bf Step 3}: Next we'll show that the map $ \delta$ is a contraction map. Let us take any  $\varkappa,y\in 	\mathbb{Y}$, $\theta\in(\lambda_{j},\theta_{j+1}],\ j=1,\dots,n$, we compute
	\begin{align}
	&\|(\delta \varkappa)(\theta)-( \delta y)(\theta)\|_{\mathbb{X}}\nonumber\\&\le\| \mathrm{T}(\theta-\lambda_{j})(\nu_{j}(\lambda_{j},\tilde{\varkappa}(\theta^{-}_{j})))-  \mathrm{T}(\theta-\lambda_{j})(\nu_{j}(\lambda_{j},\tilde{y}(\theta^{-}_{j})))\|_{\mathbb{X}}\nonumber\\&\quad+\int_{\lambda_{j}}^{\theta}\| \mathrm{T}(\theta-\tau)\|_{\mathcal{L}(\mathbb{X})}\|B\|_{\mathcal{L}(\mathbb{U};\mathbb{X})}\|B^{*}\|_{\mathcal{L}(\mathbb{X};\mathbb{U})}\| \mathrm{T}(\theta_{j+1}-\tau)^{*}\|_{\mathcal{L}(\mathbb{X})}\|(\Gamma_{\lambda_{j}}^{\theta_{j+1}})^{-1}\|_{\mathcal{L}(\mathbb{X})}\| h_{j}(\varkappa)- h_{j}(y)\|_{\mathbb{X}}d\tau\nonumber\\&\quad+\int_{\lambda_{j}}^{\theta}\| \mathrm{T}(\theta-\tau)\|_{\mathcal{L}(\mathbb{X})}\left[\int_{0}^{\tau}|\kappa(\tau-\eta)|\|q(\eta,\tilde{\varkappa}_{\eta})-q(\eta,\tilde{y}_{\eta})\|_{\mathbb{X}}d\eta\right]d\tau\nonumber\\&\leq \| \mathrm{T}(\theta-\lambda_{j})\|_{\mathcal{L}(\mathbb{X})}\|\nu_{j}(\lambda_{j},\tilde{\varkappa}(\theta^{-}_{j}))-\nu_{j}(\lambda_{j},\tilde{y}(\theta^{-}_{j}))\|_{\mathbb{X}}\nonumber\\&\quad+\frac{M^{2}}{\delta^{j}}\int_{\lambda_{j}}^{\theta}\| \mathrm{T}(\theta-\tau)\|_{\mathcal{L}(\mathbb{X})}\| \mathrm{T}(\theta_{j+1}-\tau)^{*}\|_{\mathcal{L}(\mathbb{X})}\|h_{j}(\varkappa)- h_{j}(y)\|_{\mathbb{X}}d\tau\nonumber\\&\quad+\int_{\lambda_{j}}^{\theta}KL_{q}\kappa_{b}\|\tilde{\varkappa}_{\eta}-\tilde{y }_{\eta}\|_{D}d\tau.\label{eqn:4.20}
	\end{align}
	Using Assumptions \ref{as4.1}, we estimate
	\begin{align}\label{eqn:4.21}
	\|h_{j}(\varkappa)- h_{j}(y)\|_{\mathbb{X}}&=KL_{\nu_{j}}\|\varkappa-y\|_{\mathbb{X}}+KL_{q}b\kappa_{b}\|\tilde{\varkappa }_{\eta}-\tilde{y }_{\eta}\|_{D}.
	\end{align}
	Combine \eqref{es}, \eqref{eqn:4.20} and \eqref{eqn:4.21}, we obtain
		\begin{align}
	&\|( \delta \varkappa)(\theta)-( \delta y)(\theta)\|_{\mathbb{X}}\nonumber\\&\leq KL_{\nu_{j}}\|\varkappa-y\|_{\mathbb{X}}+ KL_{q}\kappa_{b}\gamma b\|\varkappa-y\|_{ PC} \nonumber\\&+\frac{M^{2}K^{2}b}{\delta^{j}}\bigg[KL_{\nu_{j}}\|\varkappa-y\|_{\mathbb{X}}+KL_{q}\kappa_{b}\gamma b\|\varkappa-y\|_{ PC}\bigg]\nonumber.
	\end{align}
	By above expression, we estimate
	\begin{align}\label{eqn:4.22}
&\|( \delta\varkappa)(\theta)-(\delta y)(\theta)\|_{\mathbb{X}}\nonumber\\&\leq\bigg(\bigg( KL_{\nu_{j}}+KL_{q}\kappa_{b}\gamma b\bigg) \bigg(1+\frac{M^{2}K^{2}b}{\delta^{j}}\bigg)\bigg)\big\|\varkappa-y\big\|_{ PC}.
	\end{align}
	For $\theta\in[0,\theta_{1}]$ and $\varkappa,y\in 	\mathbb{Y}$, we obtain
	\begin{align}\label{eqn:4.23}
	\nonumber &\|( \delta \varkappa)(\theta)-(\delta y)(\theta)\|_{\mathbb{X}}\\&\leq \int_{0}^{\theta}\| \mathrm{T}(\theta-\tau)\|_{\mathcal{L}(\mathbb{X})}\|B\|_{\mathcal{L}(\mathbb{U};\mathbb{X})}\|B^{*}\|_{\mathcal{L}(\mathbb{X})}\| \mathrm{T}(\theta_{1}-\tau)^{*}\|_{\mathcal{L}(\mathbb{X})}\|(\Gamma_{0}^{\theta_{1}})^{-1}\|\| h_{0}(\varkappa)- h_{0}(y)\|_{\mathbb{X}}d\tau\nonumber\\&+\int_{0}^{\theta}\| \mathrm{T}(\theta-\tau)\|_{\mathcal{L}(\mathbb{X})}\left[\int_{0}^{\tau}|\kappa(\tau-\eta)|\|q(\eta,\tilde{\varkappa}_{\eta})-q(\eta,\tilde{y}_{\eta})\|_{\mathbb{X}}d\eta\right]d\tau.\nonumber\\&\leq \frac{M^{2}}{\delta^{0}}\int_{0}^{\theta}\| \mathrm{T}(\theta-\tau)\|_{\mathcal{L}(\varkappa)}\| \mathrm{T}(\theta_{1}-\tau)^{*}\|_{\mathcal{L}(\varkappa)}\| h_{0}(\varkappa)- h_{0}(y)\|_{\mathbb{X}}d\tau\nonumber\\&+L_{q}\kappa_{b}\int_{0}^{\theta}\| \mathrm{T}(\theta-\tau)\|_{\mathcal{L}(\mathbb{X})}\|\tilde{\varkappa }_{\eta}-\tilde{y }_{\eta}\|_{D}d\tau.
	\end{align}
	Using Assumptions \ref{as4.1}, we estimate
	\begin{align}\label{eqn:4.24}
	\| h_{0}(\varkappa)- h_{0}(y)\|_{\mathbb{X}}&=KL_{q}b\kappa_{b}\|\tilde{\varkappa }_{\eta}-\tilde{y }_{\eta}\|_{D}.	
	\end{align}
		Combine \eqref{es}, \eqref{eqn:4.23} and \eqref{eqn:4.24}, we obtain
	\begin{align}\label{eqn:4.25}
	\|( \delta \varkappa)(\theta)-(\delta y)(\theta)\|_{\mathbb{X}}&\leq  \bigg(1+\frac{M^{2}K^{2}b}{\delta^{0}}\bigg)KL_{q}\kappa_{b}\gamma b\|\varkappa-y\|_{ PC}.
	\end{align}
	Similarly for $\theta\in(\theta_{j},\lambda_{j}],\ j=1,\dots,n$ and $\varkappa,y\in 	\mathbb{Y}$, we have
	\begin{align}
	\|(\delta \varkappa)(\theta)-(\delta y)(\theta)\|_{\mathbb{X}}& \leq\|\nu_{j}(\theta,\tilde{\varkappa }(\theta_{j}^{-}))-\nu_{j}(\theta,\tilde{y }(\theta_{j}^{-}))\|_{\mathbb{X}}\nonumber\\ &\leq L_{\nu_{j}}\|\tilde{\varkappa }(\theta_{j}^{-})-\tilde{y}(\theta_{j}^{-})\|_{\mathbb{X}}\nonumber\\
	&\leq L_{\nu_{j}}\|\varkappa-y\|_{\mathbb{X}}.\nonumber
	\end{align}
	Using the above estimate, we obtain
	\begin{align}\label{eqn:4.26}
	\|(\delta\varkappa)(\theta)-(\delta y)(\theta)\|_{\mathbb{X}}\leq& L_{\nu_{j}}\|\varkappa-y\|_{ PC}.
	\end{align}
	The inequalities \eqref{eqn:4.22},\eqref{eqn:4.25} and \eqref{eqn:4.26} gives
		\begin{align}\label{co}
	\|(\delta\varkappa)-(\delta y)\|_{ PC}\leq L^{'}_{F}\|\varkappa-y\|_{ PC},
	\end{align}
	where
	\begin{align}
	L^{'}_{F}&=\max\bigg\{\max_{1\leq j \leq n}\bigg(\bigg( KL_{\nu_{j}}+KL_{q}\kappa_{b}\gamma b\bigg) \bigg(1+\frac{M^{2}K^{2}b}{\delta^{j}}\bigg)\bigg), \nonumber\\&\qquad\qquad\bigg(1+\frac{M^{2}K^{2}b}{\delta^{0}}\bigg)KL_{q}\kappa_{b}\gamma b,\ \max_{1\leq j \leq n}L_{\nu_{j}}\bigg\}.\nonumber
	\end{align}
	By condition \eqref{LF1}, we have $L^{'}_{F}<1$. Hence, by the estimate \eqref{co}, we conclude that $\delta$ is a contraction map. Then $\delta$ has a unique fixed point. 
Which is a mild solution of (\ref{eqn:e4}). Thus, system (\ref{eqn:e4}) is totally controllable.	
	\end{proof}
\section{Application}\label{sec5}
\begin{Ex}Let us consider the following transport partial differential equation with impulsive effect and nonlocal condition:
\begin{equation}\label{exp}
\left\{
\begin{aligned}	
\frac{\partial }{\partial \theta}z(\theta,\hslash)&=\frac{\partial}{\partial\hslash}z(\theta, \hslash)+\mu(\theta, \hslash)+F(\theta, z(\theta-\beta, \hslash)),\;
0\le \hslash\le\pi,\\ &\qquad \qquad \theta\in (\lambda_{j},\theta_{j+1}),\ j=0,\dots,n,\ \beta>0, \\
	z(0,\hslash)&=0, \  0\le \hslash\le\pi, \\
z(\theta,\hslash)&=\nu_{j}(\theta,z(\theta^{-}_{j},\hslash)),\;\theta\in (\lambda_j,\theta_j],\ j=1,\dots,n,\ 0\le \hslash\le\pi, \\
z(\theta,\hslash)&=\phi(\theta,\hslash)+\nu(z(\theta,\hslash)),\; \theta \in [-\beta, 0], \ 0\le \hslash\le\pi,
\end{aligned}
\right.
\end{equation}
where $\phi: [-\beta,0]\times [0, \pi]\rightarrow \mathbb{R}$, is piecewise continuous function.
\end{Ex}
 Take $J=[0,b]$, $\mathbb{X}=\mathbb{U}= L^{2}([0,\pi];\mathbb{R}).$
Let the operator $ A : D(A)\subset \mathbb{X} \rightarrow \mathbb{X}$ be defined as
\begin{align*}
Av = \frac{\partial }{\partial\hslash}v, \  \text{ where }  D(A) := \{v\in H^{1}((0, \pi);\mathbb{R}) : v(\pi)=0\}.
\end{align*}
Note that, $C_0^\infty([0,\pi];\mathbb{R})\subset D(A)$ and hence $D(A)$ is dense in $\mathbb{X}$. It can be easily verified that the operator $A$ with this domain is closed, similarly as proved in Example A.3.47 \cite{curtain2012introduction}. Adjoint of operator $A$ is given by
\begin{align*}
A^{*}v = -\frac{\partial }{\partial\hslash}v, \  \text{ where }
D(A^{*}) := \{v\in H^{1}((0, \pi);\mathbb{R}) : v(0)=0\}.
\end{align*}
Moreover,
\begin{align*}
Re(<Av,v>)=-\frac{1}{2}|z(0)|^{2}\le 0,
\end{align*}
and
\begin{align*}
Re(<A^{*}v,v>)=-\frac{1}{2}|z(\pi)|^{2}\le 0.
\end{align*}
Hence by applying corollary 2.2.3 \cite{curtain2012introduction}, we obtain that $A$ is the infinitesimal generator of a contraction semigroup $ \mathrm{T}(\theta)$ on $\mathbb{X}$.
The semigroup $( \mathrm{T}(\theta))_{t\geq0}$ (see, Example 2.3.8 in \cite {Tucsnak}) is given by
	\begin{equation}
( \mathrm{T}(\theta)v)(\hslash)=
\left\{
\begin{aligned}
\begin{array}{ll}
v(\hslash+\theta),&\mbox{ if } \;(\hslash+\theta) \leq\pi,\\
0,& \mbox{ if }\;(\hslash+\theta)  \textgreater \pi,
\end{array}
\end{aligned}
\right.
\end{equation}
where $\theta\in J.$ The semigroup $\{ \mathrm{T}(\theta\}_{\theta\geq0}$ is not compact on $\mathbb{X}$ (see Example 1.27 in \cite{NR}).
Next, let us define the  following operator $ B: L^{2}([0, \pi];\mathbb{R})\rightarrow \mathbb{X}$, such that
\begin{align}
B(u(\theta))(\hslash)=u(\theta)(\hslash)={\mu}(\theta,\hslash),\ \theta\in J,\ \hslash\in [0,\pi].
\end{align}
Let the functions   $ \varkappa: J \rightarrow \mathbb{X} $ and $\phi:[-\beta,0]\rightarrow  \mathbb{X}$ be given by
\begin{align*}
{\varkappa}(\theta)(\hslash)&=z(\theta,\hslash),\ \hslash\in [0,\pi],\\
{\phi}(\theta)(\hslash)&=\phi(\theta, \hslash),\  \hslash\in [0,\pi].
\end{align*}
We now consider non-instantaneous impulse $$\nu_{j}(\theta,\varkappa(\theta_i^-))(\hslash):=\nu_{j}(\theta,z(\theta^{-}_{j},\hslash)),\ j = 1,\dots, n.$$ Let us choose
\begin{align*}
\nu_{j}(\theta,\varkappa)= \theta \varkappa,\ \mbox{for}\ \theta\in (\theta_j,\lambda_j],\ j=1,\dots,n.
\end{align*}
We now calculate
\begin{align*}
\|\nu_{j}(\theta,\varkappa)-\nu_{j}(\theta,y)\|_{\mathbb{X}}&=\|\theta \varkappa-\theta y\|_{\mathbb{X}}=|\theta|\|\varkappa-y\|_{\mathbb{X}}\\
&\le b\|\varkappa-y\|_{\mathbb{X}}.
\end{align*}
Hence, impulse functions satisfy (H3).
\begin{itemize}
\item[\textbf{Case 1:}]
First, let us consider the following nonlinear function and and nonlocal condition given by

The nonlinear function $\eta: J_{1}=\bigcup^{n}_{j=0}[\lambda_{j},\theta_{j+1}] \times D\rightarrow  \mathbb{X}$ is defined as
$$F(\theta, z(\theta-\beta, \hslash)) =k_{0}\sin(z(\theta-\beta),\hslash),$$
$$\eta(\theta,\varkappa_{\theta})(\hslash)=F(\theta, z(\theta-\beta, \hslash))=k_{0}\sin(\varkappa_{\theta}), \ \hslash\in[0,\pi],$$
where $D$ is defined in \eqref{12}.
We now estimate
\begin{align*}
\left\|\eta(\theta,\varkappa_{\theta})-\eta(\theta,y_{\theta})\right\|_{\mathbb{X}}&=\left\| k_{0}\sin(\varkappa_{\theta})-k_{0}\sin(y_{\theta})\right\|_{\mathbb{X}}\\
&= \left\|k_{0}2\cos\left(\frac{\varkappa_{\theta}+y_{\theta}}{2}\right) \sin\left(\frac{\varkappa_{\theta}-y_{\theta}}{2}\right)\right\|_{\mathbb{X}}\\
&\le{k_{0}}\left\|\varkappa_{\theta}-y_{\theta} \right\|_{D}.
\end{align*}
Also, {$\|\eta(\theta,\varkappa_{\theta}\|\leq k_0.$} Hence, the nonlinear function satisfies the Assumption (H2).

Now we consider $ \nu:PC([0,\tau];\mathbb{X})\rightarrow \mathbb{X}$ such that
\begin{align*}
(\nu(\varkappa))(\theta)(\hslash):=\nu(z(\theta, \hslash)), \ \theta \in [0, \tau].
\end{align*}
We can choose the function $\nu$ as
\begin{align*}
(\nu(\varkappa))(\theta)(\hslash):= \sum_{j=1}^{n}\alpha_{j}z(\theta_{j}, \hslash),\ \theta_{j}\in J,\ \hslash\in [0,\pi],
\end{align*}
where $\alpha_{j}'s$ are small enough, then $\xi$ satisfy Hypothesis (H5) (see \cite{LL}).//
Substituting all of the above expressions in the system (\ref{exp}), it can be expressed as an abstract form given in (\ref{eqn:e1}) satisfying the Assumptions \ref{as2.1}.  Finally, by Theorem \ref{jt3}, the semilinear system \eqref{exp} (equivalent to the system \eqref{eqn:e1}) is totally controllable.
\item[\textbf{Case 2:}] Secondly, let us define nonlinear function and nonlocal condition as follows:
\begin{align}\label{nl}
F(\theta, z(\theta-\beta, \hslash))=\int_{0}^{\theta}(\theta-s)\rho(s,z(s-\beta,\hslash))ds,
\end{align}
$$\text{where }\rho(s,z(s-\beta,\hslash))=\dfrac{e^{-\theta}\vert z(\theta-\beta,\xi)\vert}{(a+2e^{\theta})(1+2\vert z(\theta-\beta,\xi)\vert)},\ a>-1.$$
Let us define $q: J_{1}=\bigcup^{n}_{j=0}[\lambda_{j},\theta_{j+1}] \times D\rightarrow  \mathbb{X}$, $$ q(\theta,\varkappa_{\theta})(\hslash)=\rho(s,z(s-\beta,\hslash))=\dfrac{e^{-\theta}\vert x_{\theta}\vert}{(a+2e^{\theta})(1+2\vert \varkappa_{\theta}\vert)}$$
\begin{align*}
\left\|q(\theta,\varkappa_{\theta})-q(\theta,y_{\theta})\right\| &=\left\|\dfrac{e^{-\theta}\vert \varkappa_{\theta}\vert}{(a+2e^{\theta})(1+2\vert \varkappa_{\theta}\vert)}-\dfrac{e^{-\theta}\vert y_{\theta}\vert}{(a+2e^{\theta})(1+2\vert y_{\theta}\vert)}\right\|\\
&\leq\left\vert\dfrac{e^{-\theta}}{(a+2e^{\theta})}\right\vert\left\|\dfrac{\vert \varkappa_{\theta}\vert}{(1+2\vert \varkappa_{\theta}\vert)}-\dfrac{\vert y_{\theta}\vert}{(1+2\vert y_{\theta}\vert)}\right\|\\
&\leq L_{q}\left\|\varkappa_{\theta}-y_{\theta}\right\|,
\end{align*}
for $\varkappa,y\in \mathbb{X}$ and $L_{q}=\dfrac{1}{a+2}$ and $\|q(\theta,\varkappa_{\theta})\|\leq 1.$ Hence Assumption (A2) is satisfied.

Now, the nonlocal condition is defined as
\begin{align}\label{nc}
\xi(\varkappa)(\theta)(\hslash):=0.
\end{align}
Partial differential system (\ref{exp}) can be expressed in abstract form (\ref{eqn:e4}), by substituting nonlinear function \eqref{nl}, nonlocal condition \eqref{nc} and all other substitution are same as in case 1. Finally, by applying the Theorem \ref{jt6}, we can conclude that the semilinear system \eqref{exp} (equivalent to \eqref{eqn:e4}) is totally  controllable.
\end{itemize}
\medskip\noindent
{\bf Acknowledgments:} Corresponding author would like to thank the Council of Scientific and Industrial Research, New Delhi, Government of India, project (ref. no. 25(0315)/20/EMR-II).
The authors wishes to express special thanks to Department of Mathematics, Indian Institute of Technology Roorkee-IIT Roorkee, for providing stimulating scientific environment and resources.


\begin{thebibliography}{10}
	
	\bibitem{HMA}
	Ahmed, H.M., El-Borai, M.M., El Bab, A.O. and Ramadan, M.E., Approximate controllability of noninstantaneous impulsive Hilfer fractional integrodifferential equations with fractional Brownian motion, \emph{Boundary Value Problems}, 2020, 2020(1), 1-25.
	\bibitem{AS}
	\text{Arora, S., Singh, S., Dabas, J. and Mohan, M.T.}, Approximate controllability of semilinear impulsive functional differential systems with nonlocal conditions, \emph{IMA Journal of Mathematical Control and Information}, 2020.
		\bibitem{azbelev2007introduction}
	Azbelev, N.V. and Rakhmatullina, L., \emph{Introduction to the Theory of Functional Differential Equations: Methods and Applications}, Hindawi Publishing Corporation, 2007.
	\bibitem{balachandran2000controllability}
	Balachandran, K., Sakthivel, R. and Dauer, J.P., Controllability of neutral
		functional integrodifferential systems in Banach spaces, \emph{Computers \&
		Mathematics with Applications}, 2000, 39(1-2), 117-126.
	
	
	\bibitem{benchohra2004controllability}
	Benchohra, M., Gorniewicz, L., Ntouyas, S., K. and Ouahab, A., Controllability
	results for impulsive functional differential inclusions, \emph{Reports on
		Mathematical Physics}, 2004, 54(2), 211-228.
		\bibitem{benchohra2000existence}
	Benchohra, M., and Ntouyas, S.K., Existence of solutions of nonlinear
	differential equations with nonlocal conditions, \emph{Journal of Mathematical
		Analysis and Applications}, 2000, 252(1), 477-483.
	
	
		\bibitem{Byszewski91}
		Byszewski, L., Theorem about the existence and uniqueness of solution of a semilinear nonlocal cauchy problem nonlinear abstract differential equation with deviated argument,  \emph{Journal of Mathematical Analysis and Applications}, 1991, 162(2), 494-505.
			\bibitem{RK} 
		Chalishajar, D.N., George, R.K., Nandakumaran, A.K. and Acharya, F.S., Trajectory Controllability of nonlinear integro-differential system, \emph{Journal of the Franklin Institute}, 2010, 347(7), 1065-1075.
			\bibitem{D Chalishjar}
		Chalishajar, D.N. and Kumar, A., Total controllability of the second order semi-linear differential equation with infinite delay and non-instantaneous impulses, \emph {Mathematical and Computational Applications}, 2018, 23(3), 32.
		
	\bibitem{chang2007controllability}
	Chang, Y. K., Controllability of impulsive functional differential systems
	with infinite delay in Banach spaces, \emph{Chaos, Solitons \& Fractals},
	2007, 33(5), 1601-1609.
	
		\bibitem{curtain2012introduction}
	Curtain, R.F. and Zwart, H., \emph{An Introduction to Infinite-Dimensional
		Linear Systems Theory}, Springer Science \& Business Media, 2012.
	
	\bibitem{FM}	 Feckan, M. and Wang, J., A general class of impulsive evolution equations, \emph{Topological Methods in Nonlinear Analysis}, 2015, 46(2), 915-933.
	\bibitem{FX}
			\text{Fu, X. and  Liu, X.}, Existence of periodic solutions for abstract neutral non-autonomous equations with infinite delay, {\em Journal of Mathematical Analysis and Applications}, 2007, 325 (1), 249–267.
	\bibitem{hale2013introduction}
	Hale, J.K., Verduyn, L.S. and Lunel, S.M.V., \emph{Introduction to Functional Differential Equations},  Springer Science \& Business Media, 1993.
	
	\bibitem{EH}	 Hernández, E. and O’Regan, D., On a new class of abstract impulsive differential equations, \emph{Proceedings of the American Mathematical Society}, 2013, 141(5), 1641-1649.
	\bibitem{ji2011controllability}
	Ji, S., Li, G., and Wang, M., Controllability of impulsive differential systems
	with nonlocal conditions,  \emph{Applied Mathematics and Computation},
	2011, 217(16), 6981-6989.
	
	
	\bibitem{JK}
	Klamka, J., Constrained controllability of semilinear systems with delays, \emph{Nonlinear Dynamics}, 2009, 56(1-2), 169-177.
		\bibitem{KA}
	Kumar, A., Malik, M. and Sakthivel, R.,  Controllability of the second-order nonlinear differential equations with non-instantaneous impulses, \emph{Journal of Dynamical and Control Systems}, 2018, 24(2), 325-342.
	
		\bibitem{RD}
	Kumar, P., Haloi, R., Bahuguna, D. and Pandey, D.N., Existence of solutions to a new class of abstract non-instantaneous impulsive fractional integro-differential equations, \emph{Nonlinear Dynamics and Systems Theory}, 2016, 16(1), 73-85.
	
	\bibitem{MV}
Kumar, V. and Malik, M., Total controllability and observability for dynamic systems with non-instantaneous impulses on time scales, \emph {Asian journal of control}, 2019, 23(2), 847-869.
\bibitem{VM}
Kumar, V. and Malik, M., Total controllability results for a class of time-varying switched dynamical systems with impulses on time scales, \emph {Asian journal of control}, 2020, 1-9.



\bibitem{LL}
 Liang, J., Liu, J.H., and  Xiao, T.J., Nonlocal impulsive problems for nonlinear differential equations in Banach spaces, \emph {Mathematical and Computer Modelling}, 2009, 49 (3-4), 798–804.
\bibitem{LA}
		 \text{Lunardi, A.,} On the linear heat equation with fading memory, {\em SIAM Journal on Mathematical Analysis}, 1990, 21 (5), 1213–1224.
\bibitem{MM}
	Malik, M., Dhayal, R. and Abbas, S., Exact controllability of a retarded fractional differential equation with non-instantaneous impulses, \emph{Dynamics of Continuous Discrete Impulsive Systems Series B: Applications \& Algorithms}, 2019, 26(1), 53-69.
	 \bibitem{Muslim2018}
	\text{Malik, M., Kumar, A. and Feckan, M.}, Existence, uniqueness, and stability of solutions to second-order nonlinear differential equations with non-instantaneous impulses,  \emph{Journal of King Saud University-Science}, 2018, {30}, 204–213.
	
	
	
	
\bibitem{NR} \text{Nagel, R.}, \emph{One-Parameter Semigroups of Positive Operators}, Lecture Notes In Mathematics 1184, Springer Verlag, Berlin-Heidelberg- New York-Tokyo, 1986.			
%
	
		\bibitem{ntouyas1997global}
	Ntouyas, S.K. and  Tsamatos, P.C., Global existence for semilinear evolution
	equations with nonlocal conditions,  \emph{Journal of Mathematical Analysis and
		Applications}, 1997, 210(2), 679--687.
	\bibitem{JW}
					\text{ Nunziato, J.W.,}  On heat conduction in materials with memory, {\em Quarterly of Applied Mathematics }, 1971, 29 (2),  187–204.
	\bibitem{PJY}
Park, J.Y., Balachandran, K. and Arthi, G., Controllability of impulsive neutral integro-differential systems with infinite delay in Banach spaces, \emph{Nonlinear Analysis: Hybrid Systems}, 2009, 3(3), 184-194.

	\bibitem{AP}
	Pazy, A., \emph{Semigroups of Linear Operators and Applications to Partial Differential Equations}, Springer Science \& Business Media, 2012.
	
	\bibitem{PM}	Pierri, M., Henríquez, H.R. and Prokopczyk, A., Global solutions for abstract differential equations with non-instantaneous impulses, \emph{Mediterranean Journal of Mathematics}, 2016, 13(4), 1685-1708.
	
	\bibitem{Pierri2013}
	Pierri, M., Regan, D.O. and Rolnik, V., Existence of solutions for semi-linear abstract differential equations with non-instantaneous impulses, \emph{Applied Mathematics and  Computations}, 2013, 219(12), 6743-6749.
	
	\bibitem{SLS}
	Shen, L., Shi, J. and Sun, J., Complete controllability of impulsive stochastic integro-differential systems, \emph{Automatica}, 2010, 46(6), 1068-1073.
	\bibitem{Tucsnak}
     Tucsnak, M., Weiss, G., \emph {Observation and Control for Operator Semigroups},
	Birkh\''auser Verlag AG, Basel, 2009.
	
\bibitem{WJ}
\text{Wang, J., Ibrahim, A.G., Fe\v{c}kan, M., Zhou, Y.}, Controllability of fractional non-instantaneous impulsive
differential inclusions without compactness, \emph{IMA Journal of Mathematical Control and Information}, 2017, 36(2), 443-460.
	
	\bibitem{Zhang2017}
	Zhang, X., Li, Y. and Chen, P., Existence of extremal mild solution for the initial value problem of evolution equation with non-instantaneous impulses, \emph{Journal of Fixed Point Theory and Applications}, 2017, 19, 3013-3027.

	\bibitem{ZH}
	Zhu, B., Han, B., Liu, L. and Yu, W., On the fractional partial integro-differential equations of mixed type with non-instantaneous impulses, \emph{Boundary Value Problems}, 2020, 154. 
	
\end{thebibliography}
\end{document}